\newcommand{\Gmax}{{G_{\mathrm{max}}}}
\newcommand{\Gmaxbar}{{\Gbar_{\mathrm{max}}}}
\newcommand{\Lmax}{{L_{\mathrm{max}}}}
\newcommand{\Kmax}{{K_{\mathrm{max}}}}
\newcommand{\bXv}{\check{\bX}}
\newcommand{\bYv}{\check{\bY}}
\newcommand{\CF}{\operatorname{CF}}
\renewcommand{\ev}{{\check{e}}}
\title{A note on bimodal singularities and mirror symmetry}
\author{Makiko Mase and Kazushi Ueda}
\date{}
\begin{document}

\maketitle

\begin{abstract}
We discuss the relation between
transposition mirror symmetry of Berlund and H\"{u}bsch
for bimodal singularities and
polar duality of Batyrev for associated toric K3 hypersurfaces.
We also show that
homological mirror symmetry for singularities implies
the geometric construction of Coxeter-Dynkin diagrams
of bimodal singularities
by Ebeling and Ploog.
%\cite[Theorem 7]{MR3016490}.
\end{abstract}

\section{Introduction}

\emph{Mirror symmetry} is a mysterious relationship
between symplectic geometry and complex geometry
motivated by string theory.
It was originally discovered in the context of Calabi-Yau manifolds,
but it has soon become clear that singularity theory
is an important aspect of the theory
through the \emph{Calabi-Yau/Landau-Ginzburg correspondence}
\cite{MR983349,
MR1025421,
MR1104265,
Witten_PN2T2D}.

In this paper,
we discuss the relation between mirror symmetry
for bimodal singularities
and mirror symmetry for K3 surfaces.
Bimodal singularities are a natural class of singularities
which come next to simple (or 0-modal) singularities
and unimodal singularities.
They are classified by Arnold
\cite{MR0420689, MR0467795}
into 8 infinite series and 14 exceptional families.
We will deal only with singularities
defined by invertible polynomials,
since we use \emph{transposition mirror construction}
of Berglund and H\"{u}bsch \cite{Berglund-Hubsch},
which makes sense only for invertible polynomials.
An \emph{invertible polynomial} is a polynomial of the form
\begin{align}
 f(x_1, \ldots, x_n) =  f_A(x_1,\ldots,x_n) = \sum_{i=1}^n x_1^{a_{i1}} \cdots x_n^{a_{in}}
%  \qquad A = (a_{ij})_{ij} \in \Mat_{n \times n}(\bN)
\end{align}
having an isolated singularity at the origin.
Here $A = (a_{ij})_{i,j=1}^n$ is an $n \times n$ matrix
with positive integer entries.
Its \emph{Berglund-H\"{u}bsch transpose}
is the invertible polynomial
\begin{align}
 \fv(x_1,\ldots,x_n)
  = f_{A^T}(x_1,\ldots,x_n)
  = \sum_{i=1}^n x_1^{a_{i1}} \cdots x_n^{a_{ni}}
\end{align}
associated with the transpose matrix $A^T$.
The list of %20
bimodal singularities
defined by invertible polynomials in three variables is given
by Ebeling and Ploog \cite[Table 2]{MR3016490}.
In order to relate Berglund-H\"{u}bsch transposition
to mirror symmetry for K3 surfaces,
we will deal only with the cases
where the defining polynomial admits an extension
to an invertible polynomial in four variables
defining a K3 surface in a weighted projective space.
%The latter condition excludes 4 cases
%($Z_{18}$, $Q_{16}$, $S_{1,0}$, and $S_{16}$),
%whose four-variable extensions in \cite[Table 4]{MR3016490}
%are not invertible.
\pref{tb:EP} is obtained
from \cite[Table 4]{MR3016490}
by removing four cases
not satisfying this condition.
\begin{table}[t]
%\centering
\begin{align*}
\begin{array}{ccccc}
\toprule
\text{Name} & (\qv_1, \qv_2, \qv_3, \qv_4; \hv)  & \Fv(x,y,z,w) &
 F(x,y,z,w) & (q_1, q_2, q_3, q_4; h) \\
%\cmidrule(r){1-3} \cmidrule(l){4-5}
\midrule
J_{3,0} & (2,6,9,1;18) & x^6y+y^3+z^2+w^{18} & x^6+xy^3+z^2+w^{18} & (3,5,9,1;18) \\
Z_{1,0} & (2,4,7,1;14) & x^5y+xy^3+z^2+w^{14} & x^5y+xy^3+z^2+w^{14} & (2,4,7,1;14) \\
Q_{2,0} & (2,4,5,1;12) & x^4y+y^3+xz^2+w^{12} & x^4z+xy^3+z^2+w^{12} & (3,7,12,2;24) \\
W_{1,0} & (2,6,3,1;12) & x^6+y^2+yz^2+w^{12} & x^6+y^2z+z^2+w^{12} & (2,3,6,1;12) \\
S_{1,0} & (2,4,3,1;10) & x^5+xy^2+yz^2+w^{10} & x^5y+y^2z+z^2+w^{10} & (3,5,10,2;20) \\
U_{1,0} & (3,3,2,1;9) & x^3+xy^2+yz^3+w^9 & x^3y+y^2z+z^3+w^9 & (2,3,3,1;9) \\
%\cmidrule[0.1pt](r){1-3} \cmidrule[0.1pt](l){4-5}
E_{18} & (1,3,4,1;9) & x^5z+y^3+z^2w+w^9 & x^5 + y^3 + xz^2 + z w^9 & (3,5,6,1;15) \\
E_{19} & (1,3,5,1;10) & x^7y+y^3w+z^2+w^{10} & x^7+xy^3+z^2+yw^{10} & (2,4,7,1;14) \\
E_{20} & (1,4,6,1;12) & x^{11}w+y^3+z^2+w^{12} & x^{11}+y^3+z^2+xw^{12} & (6,22,33,5;66) \\
%\cmidrule(r){1-3} \cmidrule(l){4-5}
Z_{17} & (1,2,3,1;7) & x^4z+xy^3+z^2w+w^7 & x^4y+y^3+xz^2+zw^7 & (2,4,5,1;12) \\
Z_{19} & (1,3,5,1;10) & x^9w+xy^3+z^2+w^{10} & x^9y+y^3+z^2+xw^{10} & (4,18,27,5;54) \\
%\cmidrule(r){1-3} \cmidrule(l){4-5}
Q_{17} & (1,2,3,1;7) & x^5y+y^3w+xz^2+w^7 & x^5z+xy^3+z^2+yw^7 & (1,3,5,1;10) \\
Q_{18} & (1,3,4,1;9) & x^8w+y^3+xz^2+w^9 & x^8z+y^3+z^2+xw^9 & (3,16,24,5;48) \\
%\cmidrule(r){1-3} \cmidrule(l){4-5}
W_{18} & (1,4,2,1;8) & x^7w+y^2+yz^2+w^8 & x^7+y^2z+z^2+xw^8 & (4,7,14,3;28) \\
%\cmidrule(r){1-3} \cmidrule(l){4-5}
S_{17} & (1,3,2,1;7) & x^6w+xy^2+yz^2+w^7 & x^6y+y^2z+z^2+xw^7 & (1,2,4,1;8) \\
%\cmidrule(r){1-3} \cmidrule(l){4-5}
U_{16} & (1,2,2,1;6) & x^5w+y^2z+yz^2+w^6 & x^5+y^2z+yz^2+xw^6 & (3,5,5,2;15) \\
%\cmidrule[0.1pt](r){1-3} \cmidrule[0.1pt](l){4-5}
\bottomrule
\end{array}
\end{align*}
\caption{Transposition of invertible polynomials}
\label{tb:EP}
\end{table}
Here, the invertible polynomial
$F(x,y,z,w)$ is a four-variable extension
of an invertible polynomial $f(x,y,z)$ in three variables,
%\begin{align}
% f(x,y,z) = F(x,y,z,0),
%\end{align}
whose Berglund-H\"{u}bsch transpose $\fv(x,y,z)$
defines a bimodal singularity.
%\begin{align}
% f(x,y,z) = F(x,y,z,0)
%\end{align}
Note that the Berglund-H\"{u}bsch transpose
$\Fv(x, y, z, w)$ of $F(x,y,z,w)$
may not be a four-variable extension
of $\fv(x,y,z)$ in general;
\begin{align}
 \Fv(x,y,z,0) \ne \fv(x,y,z).
\end{align}
The weight $(q_1, q_2, q_3, q_4)$ is the primitive weight
which makes $F$ into a weighted homogeneous polynomial
of degree $h$.
The weight $(\qv_1, \qv_2, \qv_3, \qv_4)$ is defined similarly
as the primitive weight
which makes $\Fv$ weighted homogeneous of degree $\hv$.
%We follow the convention of \cite{MR3016490}
%that the transposed polynomial is shown in plain symbol.

The \emph{group of maximal diagonal symmetries} of $F$
is defined by
\begin{align}
 \Gmax(F)
  &:= \lc \diag(\alpha, \beta, \gamma, \delta) \in \GL_4(\bC)
   \relmid F(\alpha x, \beta y, \gamma z, \delta w) = F(x, y, z, w) \rc.
\end{align}
Since any element of $\Gmax(F)$ is contained
in the subgroup $(\bQ/\bZ)^4 \subset (\bCx)^4$,
we can use an additive notation and
write $\frac{1}{n}(a,b,c,d)$ for
$(\zeta_n^a, \zeta_n^b, \zeta_n^c, \zeta_n^d)$,
where $\zeta_n = \exp(2 \pi \sqrt{-1}/n)$.
%$$
% (\exp(2 a \pi \sqrt{-1}/n),
%  \exp(2 b \pi \sqrt{-1}/n),
%  \exp(2 c \pi \sqrt{-1}/n),
%  \exp(2 d \pi \sqrt{-1}/n)).
%$$
In the additive notation,
the group $\Gmax(F)$ of maximal diagonal symmetries
is generated by the column vectors
\begin{align*}
 \rho_i = \lb (A^{-1})_{1i}, (A^{-1})_{2i}, (A^{-1})_{3i}, (A^{-1})_{4i} \rb,
  \qquad i = 1, 2, 3, 4,
\end{align*}
of the inverse matrix of $A$.

For a subgroup $G$ of $\Gmax(F)$ containing
$
 J
  = J(F)
  = \frac{1}{h}(q_1,q_2,q_3,q_4),
%  = \diag(\zeta_h^{q_1}, \ldots, \zeta_h^{q_4}),
$
the natural action of $G$ on $\bC^4$ induces
an action of $\Gbar = G / \la J \ra$ on
the weighted projective space
$\bP=\bP(q_1,q_2,q_3,q_4)$.
This gives an action of $\Gbar$
on the hypersurface
$$
 Y := \lc [x:y:z:w] \in \bP \relmid F(x,y,z,w) = 0 \rc,
$$
since $F(x,y,z,w)$ is invariant under the action of $\Gbar$.
The quotient stack $\bX=[\bP / \Gbar]$ is a toric stack,
and we write the group of characters of its dense torus as $M$.
%the group of characters of whose dense torus will be denoted by $M$.
Since $F$ has an isolated critical point at the origin,
the hypersurface $\bY$ of $\bX$
defined by $F$ is a smooth Deligne-Mumford stack.
We write the Newton polytope of $F$
in $M_\bR = M \otimes_\bZ \bR$ as $\Delta_{(F,G)}$.
%Note that the $G$ dependence appears
%in the lattice $M$ where $\Delta_{(F,G)}$ lives.

The \emph{Berglund-H\"{u}bsch transpose} of $G$ is defined by
\begin{align*}
 \Gv :=
 \lc \rhov_1^{r_1} \rhov_2^{r_2} \rhov_3^{r_3} \rhov_4^{r_4}
   \in \Gmax(\Fv) \relmid
    \begin{pmatrix} r_1 & r_2 & r_3 & r_4 \end{pmatrix}
    A^{-1}
    \begin{pmatrix} a_1 \\ a_2 \\ a_3 \\ a_4 \end{pmatrix}
   \in \bZ
   \text{ for all }
    \rho_1^{a_1} \rho_2^{a_2} \rho_3^{a_3} \rho_4^{a_4} \in G
   \rc.
\end{align*}
The Newton polytope $\Delta_{(\Fv,\Gv)}$ of $\Fv$
in $\Mv_\bR$ is defined similarly,
where $\Mv$ is the group of characters of the dense torus
in the toric stack
$
 \bXv=[\bP(\qv_1,\qv_2,\qv_3,\qv_4) / \Gv].
$

Let $N = \Hom(M, \bZ)$ be the dual lattice of $M$.
The \emph{polar dual} of a polytope $\Delta$ in $M_\bR$
is defined by
%its \emph{polar dual} $\Delta^\circ \subset N_\bR$ is defined as
\begin{align}
 \Delta^\circ =
  \lc n \in N_\bR \relmid \la n, m \ra \ge -1 \text{ for any } m \in \Delta \rc.
\end{align}
A lattice polytope is \emph{reflexive}
if the polar dual polytope is a lattice polytope.
Polar duality of reflexive polytope is an essential ingredient
of mirror construction by Batyrev
\cite{Batyrev_DPMS}.
%
%In \pref{sc:tpd},
%we give a natural choice of a group
%$G \subset \Gmax(F)$
%for each polynomial $F$ in \pref{tb:EP}.
%
In this paper, we give a relation
between transposition and polar duality
for bimodal singularities.

\begin{theorem} \label{th:main1}
For any polynomial $F$ in \pref{tb:EP},
%and the group $G \subset \Gmax(F)$
%given in \pref{lm:},
there is an isomorphism
$\varphi : N \simto \Mv$
of abelian groups
and a reflexive polytope
$\Delta \subset M_\bR$
such that
\begin{align}
 \Delta_{(F,G)} \subset \Delta
 \quad \text{and} \quad
 \Delta_{(\Fv,\Gv)} \subset \varphi(\Delta^\circ),
\end{align}
%$\Delta_{(F,G)} \subset \Delta$ and
%$\Delta_{(\Fv,\Gv)} \subset \Delta^\circ$,
where $G$ is the unique lift
of $\Gmax(f)$
to a subgroup of $\Gmax(F) \cap \SL_4(\bC)$.
\end{theorem}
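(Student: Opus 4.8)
The plan is to realize $\Delta$ as the anticanonical polytope of the toric stack $\bX=[\bP/\Gbar]$ and to read off the two inclusions from the elementary fact that the Newton polytope of a section of the anticanonical bundle is contained in the anticanonical polytope, while its polar dual governs the transpose side. First I would make the lattices explicit. Writing $\bar M = \bZ^4$ for the character lattice of $(\bC^\times)^4$, the monomials of $F$ are the rows of $A$, all lying on the affine hyperplane $\{\langle q, \cdot\rangle = h\}$, where $q=(q_1,q_2,q_3,q_4)$ and $h=\sum_i q_i$; this last K3 identity I would check holds in every row of \pref{tb:EP}. The character lattice $M$ of the dense torus of $\bX$ is then the sublattice of degree-zero, $\Gbar$-invariant characters, and dually $N=\Hom(M,\bZ)$ is the rank-$3$ lattice in which the fan of $\bX$ lives, with ray generators $v_1,\dots,v_4$ satisfying $\sum_i q_i v_i = 0$. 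Under the standard affine identification $\langle m, v_i\rangle = e_i - 1$ for a degree-$h$ monomial $\prod_j x_j^{e_j}$, the anticanonical polytope $\Delta := \{m \in M_\bR : \langle m, v_i\rangle \ge -1,\ i=1,\dots,4\}$ contains $\Delta_{(F,G)}$ automatically, which gives the first inclusion. I would take this $\Delta$ to be the desired reflexive polytope.

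Next I would establish reflexivity of $\Delta$. The crucial input is that $G$ is taken inside $\SL_4(\bC)$: the $\SL_4$ condition makes the quotient Gorenstein and the holomorphic volume form $\Gbar$-invariant, which is exactly Batyrev's criterion for $\Delta^\circ = \operatorname{conv}(v_1,\dots,v_4)$ to be a lattice polytope. Combined with the K3 condition $h=\sum_i q_i$ and the known reflexivity of the underlying weight systems, this yields reflexivity of $\Delta$; at this step I would verify the integrality of the vertices of $\Delta^\circ$ case by case.

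The heart of the argument is the isomorphism $\varphi: N \simto \Mv$ together with the identification of $\varphi(\Delta^\circ)$ with the anticanonical polytope of $\bXv$. Both $N$ and $\Mv$ are rank-$3$ sublattices of, or quotients of, $\bZ^4$, and transposition exchanges the row data of $A$ (the monomials of $F$) with its column data (the monomials of $\Fv$), while $A^{-1}$ and $(A^{-1})^T$ interchange the roles of weights and symmetries. I would check that the group $\Gv$ defined in the excerpt is precisely the annihilator of $G$ under the pairing induced by $A^{-1}$: the condition $(r_1,r_2,r_3,r_4)\,A^{-1}\,(a_1,a_2,a_3,a_4)^T \in \bZ$ is exactly this annihilator relation, so that the dense-torus characters $\Mv$ of $\bXv$ are canonically dual to the fan lattice $N$ of $\bX$. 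This produces $\varphi$, and under it the vertices $v_i$ of $\Delta^\circ$ map to the ray data of $\bXv$, so that $\varphi(\Delta^\circ)$ is the anticanonical polytope of $\bXv$; the second inclusion $\Delta_{(\Fv,\Gv)} \subset \varphi(\Delta^\circ)$ then once more expresses a section sitting inside its anticanonical polytope.

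The main obstacle I anticipate is matching the two sides precisely. Because the four-variable transpose is genuinely asymmetric, with $\Fv(x,y,z,0)\ne\fv(x,y,z)$, the passage from $\Delta^\circ$ on the $F$-side to the anticanonical polytope on the $\Fv$-side is not dictated by one clean identity, and there is a real tension between the two inclusions: enlarging $\Delta$ helps the first inclusion but hurts the second. I expect the uniform choice $\Delta=\Delta_\bX$ to thread this needle, but confirming it, together with the integrality and annihilator computations, requires running through all sixteen cases of \pref{tb:EP}; this per-case lattice bookkeeping is where the substantive work lies.
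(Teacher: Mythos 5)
You have correctly identified the structural skeleton: under the annihilator description of $\Gv$, the lattice $\Mv$ is dual to $N$, and the images in $N$ of the four coordinate vectors (the ray data $v_1,\dots,v_4$ of $\bX$) are exactly the exponent vectors of the four monomials of $\Fv$, so the polar dual of the anticanonical polytope $\Delta_{-K}:=\{m\in M_\bR \mid \langle m,v_i\rangle\ge -1,\ i=1,\dots,4\}$ is precisely $\Delta_{(\Fv,\Gv)}$. Hence \pref{th:main1} is equivalent to finding a \emph{reflexive} polytope sandwiched as $\Delta_{(F,G)}\subset\Delta\subset\Delta_{-K}$. The genuine gap is your claim that $\Delta:=\Delta_{-K}$ itself is reflexive. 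This is false for most rows of \pref{tb:EP}, because you have conflated two different conditions: the Calabi--Yau condition $q_1+\cdots+q_4=h$ (equivalently, $G\subset\SL_4(\bC)$), which does hold throughout the table, and the Gorenstein Fano condition on $\bX$, which is what integrality of the vertices of $\Delta_{-K}$ actually requires and which fails. Concretely, for $J_{3,0}$ one has $F=x^6+xy^3+z^2+w^{18}$ with weights $(3,5,9,1;18)$; since $5\nmid 18$, the vertex of $\Delta_{-K}$ in the $y$-direction is the point with shifted exponent vector $(-1,13/5,-1,-1)$ (the ``monomial'' $y^{18/5}$), which is not a lattice point, and the quotient by $\Gbar\cong\bZ/2\bZ$ does not repair this. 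The same failure occurs for $Q_{2,0}$ (weights $(3,7,12,2;24)$, $7\nmid 24$), $E_{18}$, and most other cases. The $\SL_4$ condition makes the canonical bundle of the hypersurface $\bY$ trivial; it says nothing about the ambient anticanonical polytope being a lattice polytope. Nor can you retreat to the other endpoint of the sandwich: $\Delta_{(F,G)}$ is also not reflexive in general (for $Z_{1,0}$ the facet of $\Delta_{(F,G)}$ through the points of $x^5y$, $z^2$, $w^{14}$ has supporting functional $(1,-\tfrac{3}{2},0)$ in the paper's coordinates).

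This is exactly why the paper's proof is a case-by-case verification over all sixteen rows: for each case it fixes explicit bases of $M$ and $\Mv$ (this choice \emph{is} the isomorphism $\varphi$), and then exhibits an explicit intermediate lattice polytope $\Delta$ that happens to be reflexive --- sometimes $\Delta_{(F,G)}$ itself (as for $W_{1,0}$, $E_{20}$, $U_{16}$), but usually a strictly larger polytope obtained by adjoining carefully chosen extra vertices (e.g. $(-1,-2,-4)$ for $J_{3,0}$, or $(2,3,-1)$ and $(0,-1,-1)$ for $Z_{1,0}$) --- and checks by direct computation that its polar dual contains $\Delta_{(\Fv,\Gv)}$. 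There is no uniform recipe in the paper, and your proposal does not supply one: once the reflexivity of $\Delta_{-K}$ fails, the existence of a reflexive polytope in the sandwich is precisely the nontrivial assertion, and the per-case construction of such polytopes is the substantive content of the proof rather than routine bookkeeping.
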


This shows that
for K3 surfaces associated with bimodal singularities,
Berglund-H\"{u}bsch mirror construction
can be regarded as a special case
of Batyrev mirror construction.
This generalizes a related result
for unimodal singularities
by Kobayashi \cite[Thereom 4.3.9]{Kobayashi_DW}.

The group $\Gbar$,
which is trivial for unimodal singularities,
comes from \emph{homological mirror symemtry}
\cite{Kontsevich_HAMS} for singularities.
%is needed to relate mirror symmetry for K3 surfaces
%and mirror symmetry for singularities.
%The group $G$ is trivial for unimodal singularities.
%(cf. also \cite{Kobayashi-Mase-Ueda_EUS}).

\begin{conjecture} \label{cj:hms}
For any invertible polynomial $f$,
there is an equivalence
\begin{align} \label{eq:hms}
 D^b \Fuk \fv \cong D^b_\sing (\gr^\Lmax R)
\end{align}
of triangulated categories.
\end{conjecture}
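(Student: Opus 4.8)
The plan is to prove the equivalence by producing, on each side, a generator whose endomorphism $A_\infty$-algebra can be computed, and then identifying the two algebras. On the $B$-side, Orlov's theorem identifies $D^b_\sing(\gr^\Lmax R)$ with the homotopy category $\mathrm{MF}^\Lmax(f)$ of $\Lmax$-graded matrix factorizations of $f$. The first task is to exhibit a full strong exceptional collection (or a tilting object) in $\mathrm{MF}^\Lmax(f)$; for an invertible $f$ such a collection is assembled from the Koszul-type factorizations attached to the monomials of $f$ together with their $\Lmax$-twists. Computing the graded endomorphism space of the direct sum then presents the $B$-side by a finite-dimensional $\bC$-algebra, given as a quiver with relations, equipped with its minimal $A_\infty$-structure.

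On the $A$-side, I would realize $\fv$ as a Lefschetz fibration on its Milnor fibre, after a suitable Morsification, and apply Seidel's construction of the directed Fukaya category. A distinguished basis of vanishing cycles $(V_1, \ldots, V_\mu)$ then generates $D^b \Fuk \fv$ as an exceptional collection, and \emph{directedness} forces the endomorphism algebra to be upper triangular. Its graded pieces are the Floer groups $\mathrm{HF}^\bullet(V_i, V_j)$, whose ranks are recorded by the intersection numbers in the Coxeter-Dynkin diagram of $\fv$, while the higher $A_\infty$-products are governed by counts of holomorphic polygons bounded by the $V_i$.

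The heart of the argument is the comparison of these two directed $A_\infty$-algebras. I would first match the underlying quivers, that is, identify the Coxeter-Dynkin diagram of $\fv$ with the $\mathrm{Ext}$-quiver of the $B$-side collection; this is precisely the geometric content of the Ebeling-Ploog construction, and it is the direction one expects to fall out once the categorical equivalence is established. Matching the relations and the higher products is more delicate: here I would exploit intrinsic formality when it holds, and otherwise appeal to the uniqueness of the $A_\infty$-enhancement once the cohomology algebra and a compatible grading are pinned down on both sides.

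The main obstacle will be the $A$-side determination of the full $A_\infty$-structure on the vanishing cycles, namely the control of signs and of the higher products through holomorphic-polygon counts on the Milnor fibre of $\fv$, together with the verification that the $V_i$ split-generate $D^b \Fuk \fv$. For Brieskorn-Pham and for chain and loop type polynomials these computations are accessible through explicit models, but a general bimodal singularity mixes these building blocks, and assembling the local contributions into a single Floer-theoretic computation with consistent orientations is where the difficulty concentrates; this is what currently keeps the statement at the level of a conjecture.
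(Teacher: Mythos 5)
You are attempting to prove a statement that the paper itself does not prove: \pref{cj:hms} is stated as a conjecture and is used only as a \emph{hypothesis} in \pref{th:main2} and \pref{th:f}. The authors record it as known just for Sebastiani--Thom sums of polynomials of types A and D (Futaki--Ueda) and, via Seidel's suspension invariance, for suspensions of curve singularities (announced by Takahashi), which covers 10 of the 20 bimodal cases. So there is no paper proof to compare against, and your closing paragraph concedes the essential point: what you have written is the standard roadmap from the known cases (Orlov/Buchweitz identification of $D^b_\sing(\gr^\Lmax R)$ with $\Lmax$-graded matrix factorizations, a tilting collection of Koszul-type factorizations on the B-side, Seidel's directed Fukaya category on the A-side, then an $A_\infty$-comparison), not a proof. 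As a strategy sketch it is sound and consistent with the literature; as a proof it has gaps at exactly the places you name, plus a few you do not.

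Concretely: (a) Your proposed first step, matching the Coxeter--Dynkin diagram of $\fv$ with the Ext-quiver of the B-side collection, is both circular in this context and insufficient. In this paper the Ebeling--Ploog statement (\pref{th:EP}) is a \emph{consequence} of the conjecture, so you cannot feed it back in as input; and in any case the intersection numbers $\Delta_i \cdot \Delta_j$ only record the symmetrized Euler form $\chi(\Delta_i,\Delta_j) + (-1)^{n-1}\chi(\Delta_j,\Delta_i)$ (equations \eqref{eq:1}--\eqref{eq:2} of the paper), hence determine neither the graded dimensions of $\mathrm{HF}^\bullet(V_i,V_j)$ nor even the unsymmetrized Euler form: the quiver is genuinely underdetermined by the diagram. (b) Distinguished bases are well defined only up to the braid-group action of mutations, and exceptional collections on the B-side likewise; a comparison must either fix compatible representatives on both sides or be proved mutation-equivariant, which your sketch does not address. (c) Your fallback of ``intrinsic formality when it holds'' cannot be assumed here --- nontrivial higher products are essential already in low-modality examples --- and ``uniqueness of the $A_\infty$-enhancement'' given the cohomology algebra is a theorem one must prove in each case (typically by a grading/degree argument, as in the Futaki--Ueda computations), which is precisely the unavailable ingredient for a general invertible polynomial. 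One small correction in your favor: split-generation by the $V_i$ is not an issue, since in Seidel's framework (as in Section 3 of the paper) the directed Fukaya category is \emph{defined} with the distinguished basis of vanishing cycles as its objects, so generation is built in; the difficulty is concentrated entirely in the $A_\infty$-computation and comparison, as you correctly conclude --- which is why the statement remains a conjecture in the paper.
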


The category $\Fuk \fv$ on the left hand side
%of \eqref{eq:hms}
is the \emph{Fukaya-Seidel category},
whose objects are vanishing cycles of $\fv$
and whose spaces of morphisms are
Lagrangian intersection Floer complexes
\cite{Seidel_PL}.
It is a categorification of the Milnor lattice,
which is an important invariant in singularity theory.
The category $D^b_\sing(\gr^\Lmax R)$
on the right hand side
%of \eqref{eq:hms}
is the \emph{stable derived category},
also known as the \emph{singularity category},
defined as the quotient 
\begin{align}
 D^b_\sing(\gr^\Lmax R) = D^b(\gr^\Lmax R) / D^\perf(\gr^\Lmax R)
\end{align}
of the bounded derived category $D^b(\gr^\Lmax R)$
of finitely-generated $\Lmax$-graded $R$-modules
by the full subcategory $D^\perf(\gr^\Lmax R)$
consisting of bounded complexes of projectives.
%Stable derived category
The ring
\begin{align} \label{eq:R}
 R = \bC[x_1, \ldots, x_n] / (f(x_1,\ldots,x_n))
\end{align}
is the coordinate ring of zero of $f$,
which is graded by the abelian group
\begin{align} \label{eq:L}
 \Lmax = \bZ \vecx_1 \oplus \cdots \oplus \bZ \vecx_n \oplus \bZ \vecc
  / \lb a_{i1} \vecx_1 + \cdots + a_{in} \vecx_n - \vecc \rb_{i=1}^n
\end{align}
of rank one.
Stable derived categories
are first introduced
by Buchweitz \cite{Buchweitz_MCM}
motivated by a work of Eisenbud \cite{Eisenbud_HACI},
and later by Orlov \cite{Orlov_TCS}
following an idea of Kontsevich.

In the case of bimodal singularities,
Ebeling and Ploog gave the following geometric construction
of Coxeter-Dynkin diagrams:

\begin{theorem}[{\cite[Theorem 7]{MR3016490}}]
 \label{th:EP}
For any bimodal singularity $\fv$
in \cite[Table 2]{MR3016490},
there is a distinguished basis $(\Delta_i)_{i=1}^\mu$
of vanishing cycles
and a collection $(\cE_i)_{i=1}^\mu$
of objects of $D^b \coh \bY$ such that
\begin{align}
 \Delta_i \cdot \Delta_j
  = - \sum_k (-1)^k \dim \Ext^k(\cE_i, \cE_j).
\end{align}
\end{theorem}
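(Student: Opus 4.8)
Both sides of the asserted identity are Euler pairings on triangulated categories: the left-hand side is read off from the Fukaya--Seidel category of $\fv$, the right-hand side from $D^b \coh \bY$. The plan is to deduce the statement from homological mirror symmetry, \pref{cj:hms}, by transporting the vanishing cycles $\Delta_i$ along a chain of exact equivalences into $D^b \coh \bY$ and taking the $\cE_i$ to be their images; an equivalence preserves morphism spaces, so the two pairings then agree term by term.

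On the symplectic side, since $\fv$ is a three-variable singularity its Milnor fibre is a complex surface and the $\Delta_i$ are Lagrangian two-spheres, hence spherical objects of $D^b \Fuk \fv$. Floer cohomology computes the graded morphism spaces, and the geometric intersection number is the alternating sum of their dimensions; the overall sign is pinned by comparing $\Delta_i \cdot \Delta_i = -2$ with $\Hom^\bullet(\Delta_i, \Delta_i) \cong H^\bullet(S^2)$, giving
\begin{align*}
 \Delta_i \cdot \Delta_j
  = - \sum_k (-1)^k \dim \Hom^k_{D^b \Fuk \fv}(\Delta_i, \Delta_j).
\end{align*}
The whole content of the theorem is thus the invariance of this pairing under the equivalences to come.

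Feeding this into \pref{cj:hms} sends the distinguished basis to a generating collection of $D^b_\sing(\gr^\Lmax R)$ with the same Euler pairing, and it remains to identify this category with $D^b \coh \bY$. The tool for this is Orlov's comparison \cite{Orlov_TCS} between graded singularity categories and derived categories of coherent sheaves: for the Calabi--Yau hypersurface $\bY$ of \pref{tb:EP} the Gorenstein parameter vanishes, so the relevant form of his theorem is an equivalence with no semiorthogonal remainder, under which spherical objects go to spherical objects and the Euler pairing becomes $\chi(\cE_i, \cE_j) = \sum_k (-1)^k \dim \Ext^k(\cE_i, \cE_j)$. Granting the identification, combining with the displayed formula yields the claim, the signs being compatible because Serre duality on the K3 surface $\bY$ makes $\chi$ symmetric with $\chi(\cE_i, \cE_i) = 2$, matching $\Delta_i \cdot \Delta_i = -2$.

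The hard part is exactly this identification. The ring that \pref{cj:hms} attaches to $\fv$ is the three-variable coordinate ring $R$ of its transpose, whose projective spectrum is one-dimensional; moreover a bimodal singularity has positive modality and hence positive Gorenstein parameter, so $D^b_\sing(\gr^\Lmax R)$ is not $D^b \coh \bY$ but a stacky curve together with extra exceptional objects. Reaching the surface $\bY$ forces one to pass to the four-variable extension $F$---where the Calabi--Yau form of Orlov's theorem applies---and then to descend along the group $\Gbar$, cutting the maximal orbifold structure down to the K3 $\bY$ just as $\Gmax(f)$ is cut down to $G$; this is precisely where $\Gbar$, trivial in the unimodal case, becomes indispensable. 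Reconciling the change of dimension and the change of parity of the intersection form under this Thom--Sebastiani stabilisation, tracking the signs it introduces, and verifying that the orbifold data match so that the exceptional summands cancel and the symmetric Mukai pairing survives, is where the genuine work lies; once these compatibilities are established, the equality of intersection and Euler forms is formal.
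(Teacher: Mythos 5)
Your overall skeleton --- transport the distinguished basis through \pref{cj:hms}, pass to a four-variable Calabi--Yau extension $F$, and finish with Orlov's equivalence --- agrees with the paper's proof of \pref{th:f}, but two of your steps are wrong or missing, and they are exactly where the content lies. First, the Floer-theoretic input is incorrect. In the directed Fukaya--Seidel category $D^b \Fuk \fv$ the vanishing cycles form an \emph{exceptional} collection, not a collection of spherical objects: $\hom^*(\Delta_i,\Delta_i) = \bC \cdot \id_{\Delta_i}$ and $\hom^*(\Delta_i,\Delta_j) = 0$ for $i > j$. Your displayed formula $\Delta_i \cdot \Delta_j = -\sum_k (-1)^k \dim \Hom^k_{D^b \Fuk \fv}(\Delta_i,\Delta_j)$ is therefore false: for $i = j$ it gives $-1$ instead of $-2$, and for $i > j$ it gives $0$ no matter how the cycles intersect. (You are implicitly using the Fukaya category of the Milnor fibre, where the $\Delta_i$ are indeed spherical; but \pref{cj:hms} concerns the directed category.) The correct statement is \eqref{eq:1}--\eqref{eq:2}: minus the intersection form is the \emph{symmetrized} Euler form $\chi(\Delta_i,\Delta_j) + (-1)^{n-1}\chi(\Delta_j,\Delta_i)$. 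This error is fatal to the plan as you state it: exact equivalences preserve the unsymmetrized Euler pairing, which on $D^b \Fuk \fv$ is upper-triangular with $1$'s on the diagonal, whereas $\chi$ on the K3 stack $\bY$ is symmetric with $\chi(\cE,\cE) = 2$ for spherical $\cE$; so no ``chain of exact equivalences'' can ever carry one to the other --- some step must symmetrize, and that step cannot be an equivalence.

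Second, the step that performs this symmetrization is precisely the one you defer (``where the genuine work lies''), so your proposal is incomplete at its crux. The paper fills it with \cite[Theorem 1.1]{Ueda_HSSDC}: a functor $\Phi : D^b_\sing(\gr^L R) \to D^b_\sing(\gr^L S)$, where $S$ is the coordinate ring of the four-variable extension $F$, satisfying $\Hom^i(\Phi(\cE),\Phi(\cF)) \cong \Hom^i(\cE,\cF) \oplus \Hom^{n-i-1}(\cF,\cE)$ as in \eqref{eq:Phi}. This single non-equivalence functor does both jobs at once: it moves from $n$ to $n+1$ variables and replaces $\chi$ by its $(-1)^{n-1}$-symmetrization \eqref{eq:3}; composing with Orlov's equivalence \pref{th:Orlov} for the Calabi--Yau ring $S$ (the part of your argument that is correct) produces the $\cE_i$. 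Your proposed substitute, ``Thom--Sebastiani stabilisation,'' does not apply: under \pref{as:F} the extension $F$ need not be a Thom--Sebastiani sum of $f$ with a one-variable polynomial (e.g.\ for $E_{18}$ one has $F = x^5 + y^3 + xz^2 + zw^9$, with the mixed monomial $zw^9$), and there is no separate ``descent along $\Gbar$'' step to perform either, since the group is encoded in the grading group $L$ from the outset. Likewise, Orlov's semiorthogonal decomposition for the three-variable ring $R$, which you discuss at length, is never needed: the paper bypasses it entirely via $\Phi$.
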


Here $\Delta_i \cdot \Delta_j$ is the intersection number
of vanishing cycles.
%To be more precise,
%Ebeling and Ploog formulates \pref{th:EP}
%in terms of the triangulated subcategory of $D^b \coh \bY$
%generated by $(\cE_i)_{i=1}^\mu$.
%\pref{th:EP} gives an evidence to \pref{cj:hms}:
\pref{th:main2} below shows that
\pref{th:EP} is an evidence to \pref{cj:hms}:

\begin{theorem} \label{th:main2}
\pref{cj:hms} implies \pref{th:EP}.
\end{theorem}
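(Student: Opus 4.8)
The plan is to recognize both sides of the asserted identity as the Euler form of a triangulated category and to move this form across the conjectural equivalence and a short chain of known functors. First I would note that, since $\fv$ has three variables, its Milnor fibre is a complex surface and the vanishing cycles $\Delta_i$ are Lagrangian two-spheres. The intersection form of such cycles is, up to sign, the Euler form of the Fukaya--Seidel category: diagonally $HF^k(\Delta_i,\Delta_i)\cong H^k(S^2)$ has Euler characteristic $2=-(\Delta_i\cdot\Delta_i)$, and in general the alternating sum of the Floer groups reproduces minus the geometric intersection number, so that
\begin{align*}
 \Delta_i\cdot\Delta_j = -\sum_k(-1)^k\dim HF^k(\Delta_i,\Delta_j).
\end{align*}

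Next I would invoke \pref{cj:hms} for the invertible polynomial $f$, the transpose of $\fv$, obtaining a triangulated equivalence $\Phi\colon D^b\Fuk\fv\simto D^b_\sing(\gr^\Lmax R)$ with $R=\bC[x,y,z]/(f)$. Being an equivalence of triangulated categories, $\Phi$ preserves graded morphism spaces and hence Euler characteristics, so
\begin{align*}
 \Delta_i\cdot\Delta_j = -\sum_k(-1)^k\dim\Ext^k\lb\Phi\Delta_i,\Phi\Delta_j\rb.
\end{align*}
It then remains to transport the objects $\Phi\Delta_i$ into $D^b\coh\bY$ without changing this Euler form.

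For this I would exploit that, in every row of \pref{tb:EP}, the four-variable extension is the Thom--Sebastiani sum $F(x,y,z,w)=f(x,y,z)+w^h$. The coordinate ring $R_F=\bC[x,y,z,w]/(F)$ then carries a Thom--Sebastiani type semiorthogonal decomposition of $D^b_\sing(\gr R_F)$ into $h-1$ blocks, each equivalent to $D^b_\sing(\gr^\Lmax R)$; a single block supplies a fully faithful, Euler-form-preserving functor $D^b_\sing(\gr^\Lmax R)\hookrightarrow D^b_\sing(\gr R_F)$. Since $F$ cuts out a K3 surface and $G\subset\SL_4(\bC)$, the Gorenstein parameter $q_1+q_2+q_3+q_4-h$ vanishes and $\bY$ is Calabi--Yau, so Orlov's theorem \cite{Orlov_TCS} gives an equivalence $D^b_\sing(\gr R_F)\simto D^b\coh\bY$. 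Composing the two and letting $\cE_i$ be the image of $\Phi\Delta_i$ produces objects of $D^b\coh\bY$ with $\Ext^k(\cE_i,\cE_j)\cong\Ext^k(\Phi\Delta_i,\Phi\Delta_j)$, whence the formula of \pref{th:EP}; as that theorem only asserts the existence of such a collection, this suffices.

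The principal obstacle is matching the gradings and group actions across this middle step. The $\Lmax$-grading on $R$ records the full group $\Gmax(f)$ of diagonal symmetries, and this equivariance must correspond, through both the Thom--Sebastiani decomposition and Orlov's equivalence, to the quotient presentation $\bY=[\{F=0\}/\Gbar]$ with $\Gbar=G/\la J\ra$; this is precisely why the statement fixes $G$ to be the lift of $\Gmax(f)$ into $\Gmax(F)\cap\SL_4(\bC)$. One must also verify that the grading shift labelling the chosen block lands the embedding in the Calabi--Yau chamber of \cite{Orlov_TCS}, and that the $\bZ$-grading used to form Euler characteristics on the Fukaya side is the one matched by \pref{cj:hms}. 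Once these identifications are secured, the sign in \pref{th:EP} requires no further work: it is the structural sign relating the intersection form of even-dimensional vanishing cycles to the Euler form, already isolated in the first step.
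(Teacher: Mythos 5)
There are two genuine gaps here, and the first one breaks the strategy structurally, not just in detail.

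Your opening identity confuses the directed Fukaya--Seidel category with the full Fukaya category of the Milnor fibre. In $\Fuk \fv$ --- which is the category appearing in \pref{cj:hms} --- morphisms are directed: $\hom^*(\Delta_i,\Delta_i)=\bC\cdot\id_{\Delta_i}$ and $\hom^*(\Delta_i,\Delta_j)=0$ for $i>j$. Your isomorphism $HF^*(\Delta_i,\Delta_i)\cong H^*(S^2)$ holds in the full Fukaya category of the fibre, not here. Consequently the Euler form of $\Fuk\fv$ is unipotent upper-triangular with respect to the distinguished basis, hence not symmetric, while the intersection form is symmetric; your identity $\Delta_i\cdot\Delta_j=-\chi(\Delta_i,\Delta_j)$ fails already on the diagonal ($\chi(\Delta_i,\Delta_i)=1$, whereas $\Delta_i\cdot\Delta_i=-2$) and for all $i>j$. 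What is true, and what the paper uses, is that minus the intersection form is the \emph{symmetrized} Euler form, \eqref{eq:1}--\eqref{eq:2}. This defect cannot be repaired downstream by your proposed functors: a chain of fully faithful, Euler-form-preserving functors out of $D^b_\sing(\gr^\Lmax R)$ --- which is exactly what your Thom--Sebastiani block followed by Orlov's equivalence would provide --- transports a non-symmetric bilinear form and so can never land on the symmetric form $-\Delta_i\cdot\Delta_j$. The paper's middle functor $\Phi$ of \cite{Ueda_HSSDC} is used precisely because it does \emph{not} preserve the Euler form: by \eqref{eq:Phi} it symmetrizes it, $\chi(\Phi\cE,\Phi\cF)=\chi(\cE,\cF)+(-1)^{n-1}\chi(\cF,\cE)$, and this symmetrization is the step your argument is missing.

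Second, the premise that every row of \pref{tb:EP} has $F=f+w^k$ is false: it holds only for the six series cases $J_{3,0},\dots,U_{1,0}$. In the ten exceptional cases the added monomial is coupled to the other variables --- $zw^9$ for $E_{18}$, $yw^{10}$ for $E_{19}$, $xw^{12}$ for $E_{20}$, and so on --- and this coupling is forced by \pref{as:F}: for $E_{18}$, say, $\Gmax(f)$ contains $\frac{1}{2}(0,0,1)$, whose $\SL_4(\bC)$-lift $\frac{1}{2}(0,0,1,1)$ sends $w^{15}$ to $-w^{15}$, so the Thom--Sebastiani extension $f+w^{15}$ is not invariant under $G$ and does not even define the stack $\bY$ with its $\Gbar$-action. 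So the semiorthogonal decomposition you invoke is unavailable in most cases, and where it is available it supplies the wrong kind of functor, as explained above. The correct route is the paper's: pass through the hyperplane-section theorem of \cite{Ueda_HSSDC}, which applies to any $F$ satisfying \pref{as:F} and has the symmetrization property \eqref{eq:Phi}, then apply Orlov's equivalence \pref{th:Orlov}, and conclude with \eqref{eq:1}--\eqref{eq:2}.
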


\pref{cj:hms} is known
for Sebastiani-Thom sum of polynomials
of types A and D
\cite{Futaki-Ueda_BP,Futaki-Ueda_Dn}.
Since Fukaya-Seidel categories are invariant
under suspension
\cite{Seidel_suspension},
\pref{cj:hms} for a suspension
of a polynomial in two variables
is reduced to the case of curve singularities,
which is announced by Takahashi
\cite{Takahashi_curve}.
Together, they cover 10 out of 20 cases
in \cite[Table 2]{MR3016490},
and \pref{th:main2} gives an alternative proof
of \pref{th:EP} for such cases.

\emph{Acknowledgment}:
K.~U. is supported by JSPS Grant-in-Aid for Young Scientists No.~24740043.
A part of this work is done
while K.~U. is visiting Korea Institute for Advanced Study,
whose hospitality and wonderful working environment
is gratefully acknowledged.

\section{Transposition and polar duality}
 \label{sc:tpd}

Let
$
 F = \sum_{i=1}^n x_1^{a_{i1}} \cdots x_n^{a_{in}}
$
be an invertible polynomial
associated with a matrix $A = (a_{ij})_{i,j=1}^n$.
%and
%$
% \Fv = \sum_{i=1}^n x_1^{a_{i1}} \cdots x_n^{a_{ni}}
%$
%be its Berglund-H\"{u}bsch transpose.
The group $\Lmax$ defined in \eqref{eq:L}
is the group of characters of
\begin{align} \label{eq:K}
 \Kmax = \lc (\alpha_1, \ldots, \alpha_n) \in (\bCx)^n \relmid
  \alpha_1^{a_{11}} \cdots \alpha_n^{a_{1n}}
   = \cdots
   = \alpha_1^{a_{n1}} \cdots \alpha_n^{a_{nn}} \rc.
\end{align}
The group
\begin{align}
 \Gmax(F) = \lc \diag(\alpha_1, \ldots, \alpha_n) \in \GL_n(\bC) \relmid
  \prod_{j=1}^n \alpha_j^{a_{ij}} = 1 \text{ for any }  i = 1, \ldots, n \rc
\end{align}
of maximal diagonal symmetries of $F$
is the kernel of the map
\begin{align}
\begin{array}{ccc}
 \Kmax & \to & \bCx \\
 \vin & & \vin \\
 (\alpha_1, \ldots, \alpha_n) & \mapsto & \alpha_1^{a_{11}} \cdots \alpha_n^{a_{1n}},
\end{array}
\end{align}
so that one has an exact sequence
\begin{align} \label{eq:exact1}
 1 \to \Gmax(F) \to \Kmax \to \bCx \to 1.
\end{align}
Let $(q_1, \ldots, q_n)$ be the primitive weight
which makes $F$ weighted homogeneous of degree $h$.
Set
$
 J(F)
  = \diag(\zeta_h^{q_1}, \ldots, \zeta_h^{q_n})
  \in \Gmax(F)
$
where $\zeta_h = \exp(2 \pi \sqrt{-1}/h)$
is the primitive $h$-th root of unity.
Then one has an exact sequence
\begin{align} \label{eq:exact2}
 1 \to \bCx \xto{\varphi} \Kmax \to \Gmaxbar(F) \to 1
\end{align}
where
$
 \varphi(\alpha) = (\alpha^{q_1}, \ldots, \alpha^{q_n})
$
and
$\Gmaxbar(F) = \Gmax(F) / \la J(F) \ra$.

Let $G$ be a subgroup of $\Gmax(F)$
containing $J(F)$ and set
$
 \Gbar = G / \la J(F) \ra.
$
The inverse image of $\Gbar$ in \eqref{eq:exact2}
will be denoted by $K$:
\begin{align} \label{eq:exact3}
 1 \to \bCx \xto{\varphi} K \to \Gbar \to 1.
\end{align}
The group of characters of $K$ will be denoted by $L$.
Let $\bT$ be the cokernel of the inclusion
$K \subset (\bCx)^n$,
which is isomorphic to $(\bCx)^{n-1}$.
Let $M = \Hom(\bT, \bCx)$
be the group of characters of $\bT$.
One has an exact sequence
\begin{align} \label{eq:exact4}
 1 \to K \to (\bCx)^n \to \bT \to 1
\end{align}
of abelian groups,
which induces
%The exact sequence \eqref{eq:exact4} induces
an exact sequence
\begin{align}
 1 \to M \to \bZ^n \to L \to 1
\end{align}
of groups of characters.
The natural action of $(\bCx)^n$
on $\bC^n = \Spec U$
restricts to an action of $K$,
which induces an $L$-grading
on the coordinate ring $U = \bC[x_1,\ldots,x_n]$.
The quotient stack
$
 \bX = [(\bC^n \setminus \bszero)/K]
$
is a toric Deligne-Mumford stack
with the dense torus $\bT$
and the Picard group $L$.

The polynomial $F$ gives a section
of a line bundle on $\bX$,
so that its Newton polytope
\begin{align}
 \Delta_F
  = \Conv \lc (a_{i1}, \ldots, a_{in}) \rc_{i=1}^n
  \subset \bR^n
\end{align}
is a lattice polytope
in the translate of $M_\bR = M \otimes \bR$.
This polytope,
considered as a lattice polytope in $M_\bR$,
will be denoted by $\Delta_{(F,G)}$.
It is well-defined up to translation by a lattice vector.
Let $S = U / (F)$ be the quotient ring of $U$
by the ideal generated by the polynomial $F$.
Since $F$ has an isolated critical point at the origin,
the hypersurface
$
 \bY = [(\Spec S \setminus \bszero)/K] \subset \bX
$
is a smooth Deligne-Mumford stack.

The Berglund-H\"{u}bsch transpose
of the pair $(F, G)$ is defined
in \cite{Berglund-Hubsch,Krawitz}
as the pair $(\Fv, \Gv)$ of the invertible polynomial
$
 \Fv = \sum_{i=1}^n x_1^{a_{1i}} \cdots x_n^{a_{ni}}
$
and the group
\begin{align*}
 \Gv :=
 \lc \rhov_1^{r_1} \cdots \rhov_n^{r_n}
   \in \Gmax(\Fv) \relmid
    \begin{pmatrix} r_1 & \cdots & r_n \end{pmatrix}
    A^{-1}
    \begin{pmatrix} a_1 \\ \vdots \\ a_n \end{pmatrix}
   \in \bZ
   \text{ for all }
    \rho_1^{a_1} \cdots \rho_n^{a_n} \in G
   \rc.
\end{align*}
One can easily see
%\cite{Krawitz}
%from the definition of $\Gv$
that
\begin{itemize}
 \item
the group $\Gv$ contains $J(\Fv)$
if and only if the group $G$ is contained in $\SL_n(\bC)$, and
 \item
the group $\Gv$ is contained in $\SL_n(\bC)$
if and only if the group $G$ contains $J(F)$.
\end{itemize}
Assume that $G$ (and hence $\Gv$)
contains $J(F)$ and is contained in $SL_n(\bC)$.
The toric stack $\bXv$,
its hypersurface $\bYv$,
the dense torus $\bTv \subset \bXv$,
the lattice $\Mv = \Hom(\bXv, \bCx)$
and the lattice polytope $\Delta_{(\Fv,\Gv)} \subset \Mv_\bR$
are defined in the same way as $(F,G)$.

For a lattice polytope $\Delta$ in $M_\bR$,
its {\em polar dual polytope} $\Delta^\circ$ is defined by
\begin{align*}
 \Delta^\circ = \lc n \in N_\bR \relmid
  \la n, m \ra \ge -1 \text{ for any } m \in \Delta \rc,
\end{align*}
where $N = \Hom(M, \bZ)$ and $N_\bR = N \otimes_\bZ \bR$.
A lattice polytope is \emph{reflexive}
if its polar dual polytope is a lattice polytope.
According to Batyrev
\cite{Batyrev_DPMS},
the mirror of an anti-canonical hypersurface
of a toric weak Fano manifold
associated with a reflexive polytope
is an anti-canonical hypersurface
of a toric weak Fano manifold
associated with the polar dual polytope.
The following problem asks the relation
between transposition and polar duality:

\begin{problem} \label{pb:main1}
There exist a lattice isomorphism
$
 \varphi : N \simto \Mv
$
and a reflexive polytope $\Delta \subset M$
such that $\Delta_{(F,G)} \subset \Delta$ and
$\Delta_{(\Fv,\Gv)} \subset \varphi(\Delta^\circ)$.
\end{problem}

\pref{th:main1} gives an affirmative answer
to \pref{pb:main1}
for invertible polynomials
in \pref{tb:EP}
with a specific group $G$
coming from mirror symmetry for singularities.

\section{Homological mirror symmetry for singularities}
 \label{sc:hms}

Let
$
 f = \sum_{i=1}^n x_1^{a_{i1}} \cdots x_n^{a_{in}}
$
be an invertible polynomial and
$
 \fv
%  = \sum_{i=1}^n x_1^{a_{i1}} \cdots x_n^{a_{in}}
$
be its Berglund-H\"{u}bsch transpose.
The coordinate ring
$
 R = \bC[x_1,\ldots,x_n]/(f)
$
is graded by the abelian group
$
 L = \Lmax(f)
$
of rank one defined in \eqref{eq:L}.
By perturbing $\fv$ slightly,
one obtains an exact symplectic Lefschetz fibration
\cite[Section 6]{Seidel_suspension},
which we write as $\fv$ again by abuse of notation.
Choose a distinguished basis
$(\Delta_i)_{i=1}^\mu$ of vanishing cycles,
where $\mu$ is the Milnor number of $\fv$.
The \emph{Fukaya category of Lefschetz fibration},
also known as the \emph{Fukaya-Seidel category},
is an $A_\infty$-category
such that
\begin{itemize}
 \item
the set of objects is $(\Delta_i)_{i=1}^\mu$, and
 \item
the space of morphisms are given by
\begin{align}
 \hom^*(\Delta_i, \Delta_j) =
  \begin{cases}
   0 & i > j, \\
   \bC \cdot \id_{\Delta_i} & i = j, \\
   \CF^*(\Delta_i, \Delta_j) & i < j,
  \end{cases}
\end{align}
where $\CF^*(\Delta_i, \Delta_j)$ is 
the Lagrangian intersection Floer complex, and
 \item
the composition is defined by virtual couting
of holomorphic disks with Lagrangian boundary conditions.
\end{itemize}

We refer the reader to \cite{Seidel_PL}
for the definition of Fukaya-Seidel categories.
The quasi-equivalence class
of the derived Fukaya-Seidel category $D^b \Fuk \fv$
is independ of various auxiliary choices
in the definition of $D^b \Fuk \fv$,
such as the choices of a perturbation
or a distinguished basis of vanishing cycles.
The Grothendieck group $K(\Fuk \fv)$
of the Fukaya-Seidel category
is freely generated
by the distinguished basis of vanishing cycles.
The Euler number of the Floer complex
is equal to minus the intersection number
of Lagrangian submanifolds;
\begin{align}
 \sum_{k} (-1)^k \dim \CF^k(\Delta_i, \Delta_j)
  = - \Delta_i \cdot \Delta_j.
\end{align}
The lattice of vanishing cycles
equipped with the intersection form
is called the \emph{Milnor lattice}.
The Euler form on the Grothendieck group
of a triangulated category is defined by
\begin{align}
 \chi(\cE, \cF)
  = \sum_k (-1)^k \dim \Hom^k(\cE, \cF).
\end{align}
The Fukaya-Seidel category is a
\emph{categorification}
of the Milnor lattice,
in the sense that the Grothendieck group
equipped with the symmetrized Euler form
\begin{align} \label{eq:1}
 (\Delta_i, \Delta_j)
  = \chi(\Delta_i, \Delta_j) + (-1)^{n-1} \chi(\Delta_j, \Delta_i)
\end{align}
is isomorphic to the Milnor lattice up to sign;
\begin{align} \label{eq:2}
 (\Delta_i, \Delta_j) = - \Delta_i \cdot \Delta_j.
\end{align}

Under \pref{cj:hms},
the graded stable derived category
$D^b_\sing (\gr^L R)$ is also a categorification
of the Milnor lattice of $\fv$,
where $R$ is the coordinate ring
$\bC[x_1,\ldots,x_n]/(f)$
of the singularity
defined by the transpose of $\fv$,
and $L=\Lmax(f)$ is the maximal grading group for $f$.

Let $F(x_1,\ldots,x_{n+1}) \in \bC[x_1,\ldots,x_{n+1}]$ be
a polynomial in $n+1$ variables
satisfying the following assumption:
\begin{assumption} \label{as:F}
\ 
\begin{itemize}
 \item
$F(x_1,\ldots,x_n,0)=f(x_1,\ldots,x_n)$.
 \item
$F$ is weighted homogeneous of degree
$(q_1,\ldots,q_{n+1};h)$
satisfying $q_1+\cdots+q_{n+1}=h$.
 \item
$F$ has an isolated critical point at the origin.
 \item
For any $g = \diag(\alpha_1,\ldots,\alpha_n) \in \Gmax(f)$,
the polynomial $F$ is invariant under the action
of its lift $\gtilde = \diag(\alpha_1,\ldots,\alpha_n,
\alpha_1^{-1} \cdots \alpha_n^{-1}) \in \SL_{n+1}(\bC)$.
\end{itemize}
\end{assumption}
%Note that the latter condition is restrictive;
%for example,
%$F(x,y)= x^m+x y^n$
%satisfies this condition
%if and only if $m | (n-1)$.
This condition is satisfied
for all polynomials in \cite[Table 2]{MR3016490}.
We write the image of the injective map
$
 \Gmax(f) \ni g
  \mapsto \gtilde \in \Gmax(F) \cap \SL_{n+1}(\bC)
$
as $G$.
The group $G$ defines the group $K$
fitting in the exact sequence \eqref{eq:exact3}.
Let $S=\bC[x_1,\ldots,x_n]/(F)$ be the coordinate ring
of the zero of $F$,
which is graded by the group $L$
of characters of $K$.
The corresponding projective stack
will be denoted by
$\bY = \bProj S = [(\Spec S \setminus \bszero)/K]$.

We have the following two theorems:

\begin{theorem}[{\cite[Theorem 2.13]{Orlov_DCCSTCS}}]
 \label{th:Orlov}
There is an equivalence
\begin{align} \label{eq:Orlov}
 \Psi : D^b_\sing(\gr^L S)
  \to D^b \coh \bY
\end{align}
of triangulated categories.
\end{theorem}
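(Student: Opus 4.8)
The plan is to present both sides of \eqref{eq:Orlov} as Verdier quotients of the common category $D^b(\gr^L S)$ of bounded complexes of finitely generated $L$-graded modules over the hypersurface ring $S = \bC[x_1,\ldots,x_{n+1}]/(F)$, and then to show that the two subcategories one divides by become interchangeable precisely under the Calabi-Yau condition $q_1 + \cdots + q_{n+1} = h$ imposed in \pref{as:F}. This is the Gorenstein-parameter-zero case of Orlov's general comparison between graded singularity categories and derived categories of coherent sheaves.

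First I would record the two presentations. By definition $D^b_\sing(\gr^L S) = D^b(\gr^L S)/D^\perf(\gr^L S)$, where $D^\perf(\gr^L S)$ is the thick subcategory generated by the twists $S(\ell)$, $\ell \in L$, of the free rank-one module. On the other side, a stacky version of Serre's theorem identifies $D^b \coh \bY$ with the derived quotient $D^b(\operatorname{qgr}^L S) = D^b(\gr^L S)/D^b_{\operatorname{tors}}(\gr^L S)$ by the subcategory $D^b_{\operatorname{tors}}(\gr^L S)$ of complexes with finite-length cohomology; here one uses that $L$ has rank one, so that $\bY = \bProj S$ is a well-behaved Deligne-Mumford stack, and that it is smooth, which follows from the isolated-singularity hypothesis on $F$ in \pref{as:F}.

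Next I would exploit the fact that the hypersurface ring $S$ is graded Gorenstein, with Gorenstein parameter $\left(\sum_{i=1}^{n+1} q_i\right) - h$, whose vanishing is exactly the condition in \pref{as:F}. Following Orlov, I would introduce, for each $\ell$ in a cofinal family in $L$, the truncation subcategories $\mathcal{P}_{\geq \ell}$ and $\mathcal{T}_{\geq \ell}$ generated respectively by the twisted free modules and by the torsion modules supported in degrees $\geq \ell$, and establish semiorthogonal decompositions of $D^b(\gr^L S)$ in which $D^\perf(\gr^L S)$ and $D^b_{\operatorname{tors}}(\gr^L S)$ appear as orthogonal complements of one and the same subcategory up to a finite collection of line bundles $\mathcal{O}_\bY(\ell)$. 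The length of this collection equals the absolute value of the Gorenstein parameter, the shift being dictated by Gorenstein duality $\mathbf{R}\Hom_S(-, S)$.

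The key step, and the main obstacle, is the vanishing of this correction collection when the parameter is zero: the two semiorthogonal complements then coincide, the quotients by $D^\perf(\gr^L S)$ and by $D^b_{\operatorname{tors}}(\gr^L S)$ agree, and the induced functor $\Psi$ in \eqref{eq:Orlov} is an equivalence. Making this rigorous requires the homological bookkeeping behind the semiorthogonal decompositions---generation statements for the truncation subcategories, and the control of local cohomology that underlies the derived Serre equivalence---carried out in the $L$-graded rather than $\bZ$-graded setting, which is the genuinely technical part and the only place where the rank-one and smoothness hypotheses are essential.
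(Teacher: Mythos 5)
Your outline is essentially Orlov's own argument---presenting $D^b_\sing(\gr^L S)$ and $D^b \coh \bY$ as the two Verdier quotients of $D^b(\gr^L S)$ by $D^\perf(\gr^L S)$ and by the torsion complexes, running the truncation/semiorthogonal-decomposition bookkeeping, and observing that the correction collection has length equal to the Gorenstein parameter $q_1+\cdots+q_{n+1}-h$, which vanishes under \pref{as:F}---and this is precisely the proof the paper invokes: it does not reprove the statement but cites \cite[Theorem 2.13]{Orlov_DCCSTCS}, adding only the remark that the passage from $\bZ$-graded to $L$-graded rings is straightforward. So your proposal is correct and coincides in approach and substance with the (cited) proof the paper relies on, including the one genuinely technical point you flag, namely redoing Orlov's homological bookkeeping for a general rank-one grading group $L$.
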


%Recall that the \emph{Seifert form}
%on the lattice of vanishing cycles
%is defined by
%\begin{align}
% S(\Delta_i, \Delta_j)
%  = \begin{cases}
% 0 & i > j, \\
% 1 & i = j, \\
% \Delta_i \cdot \Delta_j & i < j,
%  \end{cases}
%\end{align}
%where $\Delta_i \cdot \Delta_j$ is
%the intersection number.
%The Grothendieck group $K(\Fuk \fv)$
%of the Fukaya-Seidel category
%is freely generated
%by the distinguished basis of vanishing cycles.
%Since Euler number of the Lagrangian intersection Floer complex
%is the intersection number of Lagrangian submanifolds,
%the Euler form on the Fukaya-Seidel category
%is given by
%\begin{align}
% \chi(\Delta_i, \Delta_j)
%  = \sum_k \dim (-1)^k \Ext^k(\Delta_i, \Delta_j)
%  = S(\Delta_i, \Delta_j).
%%\begin{cases}
%% 0 & i > j, \\
%% 1 & i = j, \\
%% \Delta_i \cdot \Delta_j & i < j.
%%\end{cases}
%\end{align}
%In this sense,
%the Fukaya-Seidel category
%is the \emph{categorification}
%of the lattice of vanishing cycles
%equipped with the Seifert form.

\begin{theorem}[{\cite[Theorem 1.1]{Ueda_HSSDC}}]
There is a functor
\begin{align}
 \Phi : D^b_\sing(\gr^L R) \to D^b_\sing(\gr^L S)
\end{align}
such that
\begin{align} \label{eq:Phi}
 \Hom^i(\Phi(\cE), \Phi(\cF))
  &\cong \Hom^i(\cE, \cF) \oplus \Hom^{n-i-1}(\cF, \cE).
\end{align}
\end{theorem}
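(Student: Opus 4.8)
The plan is to realize $\Phi$ as the pushforward along the hyperplane section $\{x_{n+1}=0\}$ and to reduce the Hom computation to Serre duality in the singularity category of $f$. First I would record the elementary algebra of the extension. Since $F(x_1,\ldots,x_n,0)=f$, the difference $F-f$ is divisible by $x_{n+1}$, so I may write $F=f+x_{n+1}\,G$ with $G\in\bC[x_1,\ldots,x_{n+1}]$, and a short argument shows that $x_{n+1}$ is a non-zero-divisor in $S$ with $S/(x_{n+1})=R$. Thus the surjection $S\twoheadrightarrow R$ presents $\Spec R$ as the Cartier divisor $\{x_{n+1}=0\}$ in $\Spec S$, smooth because $f$ has an isolated singularity, yielding a closed embedding $i$ of the associated smooth Deligne--Mumford stacks. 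I would define $\Phi$ to be the induced pushforward $i_*$, i.e.\ restriction of scalars along $S\twoheadrightarrow R$. Because $x_{n+1}$ is regular, $R$ has the two-term free resolution $[\,S(-\vecx_{n+1})\xrightarrow{x_{n+1}}S\,]$ over $S$, so $R$ and every finitely generated graded projective $R$-module is perfect over $S$; hence $i_*$ carries $\operatorname{Perf}(\gr^L R)$ into $\operatorname{Perf}(\gr^L S)$ and descends to $\Phi:D^b_\sing(\gr^L R)\to D^b_\sing(\gr^L S)$. Under the Buchweitz--Orlov identification with graded matrix factorizations, $\Phi$ is realized concretely by doubling: a factorization $(\phi_0,\phi_1)$ of $f$ is sent to the factorization of $F$ with differentials $\left(\begin{smallmatrix}\phi_0 & -G\\ x_{n+1} & \phi_1\end{smallmatrix}\right)$ and $\left(\begin{smallmatrix}\phi_1 & G\\ -x_{n+1} & \phi_0\end{smallmatrix}\right)$, whose product is $F\cdot\id$.

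For the Hom formula I would use the adjunction $Li^*\dashv i_*$ to write $\Hom^i_S(\Phi\cE,\Phi\cF)\cong\Hom^i_R(Li^*i_*\cE,\cF)$, and then compute the self-intersection $Li^*i_*\cE=\cE\otimes_S^{\mathbf{L}}R$. Resolving $R$ by $[\,S(-\vecx_{n+1})\xrightarrow{x_{n+1}}S\,]$ and observing that $x_{n+1}$ acts by zero on the $R$-module $\cE$, the Koszul differential vanishes and the complex splits,
\begin{align*}
 Li^*i_*\cE\cong \cE\oplus\cE(-\vecx_{n+1})[1].
\end{align*}
Substituting and using $\Hom^i(\cE[1],\cF)=\Hom^{i-1}(\cE,\cF)$ gives
\begin{align*}
 \Hom^i_S(\Phi\cE,\Phi\cF)\cong\Hom^i_R(\cE,\cF)\oplus\Hom^{i-1}_R(\cE(-\vecx_{n+1}),\cF),
\end{align*}
whose first summand is already the desired term.

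To identify the second summand I would invoke Serre duality in $D^b_\sing(\gr^L R)$. Since $f$ is an isolated singularity in $n$ variables, this category is fractionally Calabi--Yau with Serre functor $(\vec\varepsilon)[n-2]$, where the internal shift is $\vec\varepsilon=\vecc-(\vecx_1+\cdots+\vecx_n)$. Serre duality then yields, as vector spaces,
\begin{align*}
 \Hom^{i-1}_R(\cE(-\vecx_{n+1}),\cF)\cong\Hom^{n-i-1}_R\!\bigl(\cF,\,\cE(\vec\varepsilon-\vecx_{n+1})\bigr).
\end{align*}
The construction now closes up precisely because of the Calabi--Yau hypothesis in \pref{as:F}: the relation $q_1+\cdots+q_{n+1}=h$, read in the grading group $L$, says $\vecx_1+\cdots+\vecx_{n+1}=\vecc$, so that $\vec\varepsilon-\vecx_{n+1}=\vecc-(\vecx_1+\cdots+\vecx_{n+1})=0$, the internal shift disappears, and we are left with $\Hom^{n-i-1}_R(\cF,\cE)$ as required.

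The main obstacle I anticipate is grading bookkeeping rather than any single hard step. Two points need genuine care. First, one must fix the correct normalization of the Serre functor of the graded matrix factorization category --- both the cohomological shift $[n-2]$ and, more delicately, the internal shift $\vec\varepsilon$ --- and check that the adjunction $Li^*\dashv i_*$ and the splitting of $Li^*i_*\cE$ survive passage to the Verdier quotients defining the singularity categories; the cleanest way to make these rigorous is to carry out the computations at the cochain level of the Hom-complexes of matrix factorizations, where everything is two-periodic and the stable Hom is computed directly. Second, and this is where the hypotheses are really used, one must verify that $\vecc-(\vecx_1+\cdots+\vecx_{n+1})$ vanishes in $L$ itself, and not merely after projecting to the rank-one weight lattice. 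This is the triviality of the canonical class of $\bY$, which holds for the K3 hypersurfaces of \pref{tb:EP} by the condition $\sum_i q_i=h$, but must be confirmed in the full group $L$ of characters of $K$, where torsion could a priori obstruct the cancellation.
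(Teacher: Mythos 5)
The paper contains no proof of this statement: it is quoted from \cite[Theorem 1.1]{Ueda_HSSDC}, with only the remark that the passage from $\bZ$-gradings to $L$-gradings is straightforward. So there is no internal argument to compare against; measured against the cited reference, your proposal is a faithful reconstruction of the actual proof. There, too, $\Phi$ is restriction of scalars along the surjection $S \to R = S/(x_{n+1})$ (with $x_{n+1}$ a nonzerodivisor because $S$ is a hypersurface, hence Cohen--Macaulay, and $\dim R = \dim S - 1$); the two-term free resolution $S(-\vecx_{n+1}) \to S$ of $R$ shows that $\Phi$ preserves perfect complexes and, since $x_{n+1}$ annihilates $\cE$, splits $Li^* i_* \cE \cong \cE \oplus \cE(-\vecx_{n+1})[1]$; and the second summand of the resulting adjunction formula is converted by graded Auslander--Reiten/Serre duality, with Serre functor $(\vec{\varepsilon}\,)[n-2]$ where $\vec{\varepsilon} = \vecc - \vecx_1 - \cdots - \vecx_n$, into the dual of $\Hom^{n-i-1}(\cF,\cE)$. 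Dropping that vector-space dual is harmless here, since the theorem asserts only an isomorphism of vector spaces and is used only through Euler characteristics in \eqref{eq:3}.

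The one point you explicitly leave open --- whether $\vecx_1 + \cdots + \vecx_{n+1} = \vecc$ holds in the full character group $L$ of $K$, torsion included --- does hold, and the verification you anticipate is short enough to record: it suffices to check the corresponding character of $(\bCx)^{n+1}$ is trivial on the generators of $K$, namely on $\varphi(\alpha) = (\alpha^{q_1},\ldots,\alpha^{q_{n+1}})$, where it reduces exactly to the hypothesis $q_1 + \cdots + q_{n+1} = h$, and on the lifts $\gtilde$ of $g \in \Gmax(f)$, where (since $F$ is $\gtilde$-invariant) it reads $\det \gtilde = 1$ and is exactly the $\SL_{n+1}(\bC)$ condition of \pref{as:F}. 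So both hypotheses of \pref{as:F}, not only the weight equality your sketch invokes, enter at this step; under the resulting identification of $L$ with $\Lmax(f)$ one gets $\vecx_{n+1} = \vec{\varepsilon}$, which is precisely the cancellation your argument needs. With that check inserted, your proof is complete and agrees with the reference in both strategy and detail, including the concrete matrix-factorization description of $\Phi$ by doubling.
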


Although both theorems are stated for $\bZ$-graded rings
in the references,
the generalization to the $L$-graded case is straighforward.
\pref{th:f} below contains \pref{th:main2}
as a special case:

\begin{theorem} \label{th:f}
Let $f$ be an invertible polynomial in $n$ variables
satisfying \pref{cj:hms}, and
$F$ be a polynomial in $n+1$ variables
satisfying \pref{as:F}.
Then for any distinguished basis
$(\Delta_i)_{i=1}^\mu$
of vanishing cycles of $\fv$,
there is a collection $(\cE_i)_{i=1}^\mu$
of objects of $D^b \coh \bY$
satisfying
\begin{align} \label{eq:E}
 \Delta_i \cdot \Delta_j
  = - \chi(\cE_i, \cE_j).
\end{align}
\end{theorem}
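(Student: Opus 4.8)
The plan is to manufacture the objects $\cE_i$ by transporting the distinguished basis of vanishing cycles through the three functors already available — the equivalence of \pref{cj:hms}, the functor $\Phi$ of \cite[Theorem 1.1]{Ueda_HSSDC}, and the equivalence $\Psi$ of \pref{th:Orlov} — and then to recognize the resulting Euler form on $D^b \coh \bY$ as the symmetrized Euler form \eqref{eq:1}, which by \eqref{eq:2} is minus the intersection form. Concretely, I would let $\Theta \colon D^b \Fuk \fv \simto D^b_\sing(\gr^L R)$ denote the equivalence asserted by \pref{cj:hms} (with $L = \Lmax(f)$), and set
\[
 \cE_i := \Psi(\Phi(\Theta(\Delta_i))),
\]
an object of $D^b \coh \bY$. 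This produces exactly $\mu$ objects, one for each member of the distinguished basis.

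Next I would compute $\chi(\cE_i, \cE_j)$. Since $\Theta$ and $\Psi$ are equivalences of triangulated categories they preserve all graded morphism spaces, hence the Euler form, so that $\chi(\cE_i, \cE_j) = \chi(\Phi(\Theta(\Delta_i)), \Phi(\Theta(\Delta_j)))$. The only nontrivial input is the behaviour of $\Phi$, which I read off from \eqref{eq:Phi}:
\begin{align*}
 \dim \Hom^k(\Phi(\Theta(\Delta_i)), \Phi(\Theta(\Delta_j)))
  &= \dim \Hom^k(\Theta(\Delta_i), \Theta(\Delta_j)) \\
  &\quad + \dim \Hom^{n-k-1}(\Theta(\Delta_j), \Theta(\Delta_i)).
\end{align*}
Taking the alternating sum over $k$ and reindexing the second family by $l = n - k - 1$, under which $(-1)^k = (-1)^{n-1}(-1)^l$, turns it into $(-1)^{n-1} \chi(\Theta(\Delta_j), \Theta(\Delta_i))$. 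Using once more that $\Theta$ preserves the Euler form, I would obtain
\begin{align*}
 \chi(\cE_i, \cE_j)
  &= \chi(\Delta_i, \Delta_j) + (-1)^{n-1} \chi(\Delta_j, \Delta_i) \\
  &= (\Delta_i, \Delta_j)
   = - \Delta_i \cdot \Delta_j,
\end{align*}
where the second equality is the definition \eqref{eq:1} of the symmetrized Euler form and the third is \eqref{eq:2}. This is precisely \eqref{eq:E}.

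The argument is thus a bookkeeping of three known statements, and its delicacy lies entirely in making the inputs fit together. First, I would check the grading compatibility: the functor $\Phi$ and the equivalence $\Psi$ must be applied with the same grading group $L$ that appears on the right-hand side of \pref{cj:hms}, and here \pref{as:F} is exactly what guarantees this — the lift $g \mapsto \gtilde$ identifies $\Gmax(f)$ with the subgroup $G \subset \Gmax(F) \cap \SL_{n+1}(\bC)$, so that $R$ and $S$ carry compatible $L$-gradings and the three categories are linked by the stated functors (their $L$-graded generalizations being routine, as noted after \pref{th:Orlov}). Second, the normalization $q_1 + \cdots + q_{n+1} = h$ in \pref{as:F} is what places us in the Calabi--Yau (Gorenstein parameter zero) case of Orlov's theorem, so that \pref{th:Orlov} is a genuine equivalence rather than merely a semiorthogonal embedding. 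The one genuinely arithmetic point to verify carefully is the sign: the dimension shift $n - k - 1$ in \eqref{eq:Phi} must reproduce precisely the factor $(-1)^{n-1}$ occurring in \eqref{eq:1}, and the reindexing above is what confirms this match. Once these are in place the identity \eqref{eq:E} follows, and specializing to the polynomials of \cite[Table 2]{MR3016490} yields \pref{th:main2}.
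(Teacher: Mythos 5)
Your proposal is correct and follows exactly the paper's own argument: transport the vanishing cycles through the equivalence of \pref{cj:hms}, the functor $\Phi$, and Orlov's equivalence $\Psi$, then combine \eqref{eq:Phi} with \eqref{eq:1} and \eqref{eq:2} to identify the Euler form with minus the intersection form. Your explicit reindexing verification of the sign $(-1)^{n-1}$ and the remarks on grading compatibility are just elaborations of steps the paper states without comment.
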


\begin{proof}
The distinguished basis $(\Delta_i)_{i=1}^\mu$
of vanishing cycles are objects
of the Fukaya-Seidel category.
Let $(\cF_i)_{i=1}^\mu$ be their images
in $D^b_\sing(\gr^L R)$
under the equivalence \eqref{eq:hms},
so that
\begin{align} \label{eq:F}
 \chi(\Delta_i, \Delta_j)
  &= \chi(\cF_i, \cF_j).
\end{align}
If we set
$
 \cG_i = \Phi(\cF_i)
$
and
$
 \cE_i = \Psi(\cG_i)
$
for $i=1, \ldots, \mu$,
then we have
\begin{align} \label{eq:3}
 \chi(\cG_i, \cG_j)
  &= \chi(\cF_i, \cF_j) + (-1)^{n-1} \chi(\cF_j, \cF_i)
\end{align}
by \eqref{eq:Phi} and
\begin{align} \label{eq:4}
 \chi(\cE_i, \cE_j)
  &= \chi(\cG_i, \cG_j)
\end{align}
by \pref{th:Orlov}.
Now \eqref{eq:E} follows from \eqref{eq:1},
\eqref{eq:2}, \eqref{eq:3} and \eqref{eq:4}.
\end{proof}

\section{Proof of \pref{th:main1}}
 \label{sc:main1}

%\pref{th:main1} is proved
%by a case-by-case analysis.
%We give a detailed description
%for the $J_{3,0}$-singularity
%appearing on the first line in \pref{tb:EP},
%and collect all other cases in Appendix.

\subsection{$J_{3,0}$-singularity}

The invertible polynomial
\begin{align*} \label{eq:j30}
 \fv = x^6 y + y^3 + z^2
\end{align*}
defines a $J_{3,0}$-singularity.
Its Berglund-H\"{u}bsch transpose
\begin{align*}
 f = x^6 + x y^3 + z^2
\end{align*}
is weighted homogeneous of degree 
$
 (q_1, q_2, q_3; h) = (3, 5, 9; 18).
$
The group of maximal diagonal symmetries
is given by
\begin{align*}
 \Gmax(f)
  &= \{ (\alpha, \beta, \gamma) \in (\bCx)^3 \mid
   \alpha^6 = \alpha \beta^3 = \gamma^2 = 1 \}
  = \la J(f), \, \frac{1}{2}(0, 0, 1) \ra
\end{align*}
where
$
 J(f) = \frac{1}{18}(3,5,9).
$
The invertible polynomial
\begin{align*}
 F = x^6 + x y^3 + z^2 + w^{18}
\end{align*}
is a four-variable extension of $\fv$
satisfying \pref{as:F}.
The subgroup $G$ of $\Gmax(F)$
is given by
\begin{align*}
 G = \la J(F), \, \frac{1}{2}(0,0,1,1) \ra
\end{align*}
where
$
 J(F)
  = \frac{1}{18}(3, 5, 9, 1).
$
The lattice $M$ is given by
\begin{align*}
 M = \lc (i, j, k, l) \in \bZ^4 \relmid
  3 i + 5 j + 9 k + l = 0 \text{ and } k+l \equiv 0 \text{ mod } 2 \rc.
\end{align*}
For each monomial
$
 x^{a_{i1}} y^{a_{i2}} z^{a_{i3}} w^{a_{i4}}
$
in $F$, the Laurent monomial
$
 x^{a_{i1}} y^{a_{i2}} z^{a_{i3}} w^{a_{i4}}/xyzw
$
defines a regular function on $\bT$,
and hence gives a lattice point
$(a_{i1}-1, a_{i2}-1, a_{i3}-1, a_{i4}-1)$
in $M$.
By identifying $M$ with $\bZ^3$
by the basis
\begin{align*}
 e_1 &= (5,-1,-1,-1), &
 e_2 &= (0,2,-1,-1), &
 e_3 &= (-1,-1,1,-1),
\end{align*}
these points are given by
\begin{align*}
\begin{array}{c|c|c|c}
 x^6 & x y^3 & z^2 & w^{18} \\
 \hline
 (1,0,0) & (0,1,0) & (0,0,1) & (-2,-6,-9)
\end{array}
.
\end{align*}
The Newton polytope $\Delta_{(F,G)}$ is
the convex hull of these points.
Similarly,
we choose the basis of the group
$$
 \Mv := \lc (i, j, k, l) \in \bZ^4 \relmid 2 i + 6 j + 9 k + l = 0 \rc
$$
of characters
of the quotient of the dense torus of
$\bXv = \bP(2, 6, 9, 1)$
as
\begin{align*}
 \ev_1 &= (1,0,0,-2), &
 \ev_2 &= (0,1,0,-6), &
 \ev_3 &= (0,0,1,-9),
\end{align*}
so that the Newton polytope $\Delta_{(\Fv, \Gv)}$
of $\Fv/xyzw$ is the convex hull of
\begin{align*}
\begin{array}{c|c|c|c}
 x^6y & y^3 & z^2 & w^{18} \\
 \hline
 (5,0,-1) & (-1,2,-1) & (-1,-1,1) & (-1,-1,-1)
\end{array}
.
\end{align*}
Instead of $\Delta_{(F,G)}$ which is not reflexive,
take the polytope
\begin{align*}
 \Delta := \lc
  (1,0,0),(0,1,0),(0,0,1),(-2,-6,-9),(-1,-2,-4)
 \rc,
\end{align*}
which clearly contains $\Delta_{(F,G)}$.
The polytope $\Delta$ is reflexive,
whose polar dual polytope is given by
\begin{align*}
 \Delta^\circ := \lc
  (5,0,-1),(-1,2,-1),(-1,-1,1),(-1,-1,-1),(7,-1,-1)
 \rc.
\end{align*}
$\Delta^\circ$ clearly contains $\Delta_{(\Fv,\Gv)}$,
so that \pref{th:main1} holds for this case.

\subsection{$Z_{1,0}$-singularity}

The invertible polynomial
$
 f = x^5 y + x y^3 + z^2
$
is weighted homogeneous of degree 
$
 (q_1, q_2, q_3; h) = (2, 4, 7; 14).
$
The group of maximal diagonal symmetries is given by
\[
 \Gmax(f)
  = \la J(f), \frac{1}{2}(1,1,0) \ra.
\]
The four-variable extension
$
 F = x^5 y + x y^3 + z^2 + w^{14}
$
of $f$ is weighted homogeneous
of weight $(2,4,7,1)$ and satisfies \pref{as:F}.
The subgroup $G$ of $\Gmax(F)$
is given by
$
 G = \la J(F), \frac{1}{2}(1,1,0,0) \ra.
$
The lattice $M$ is given by
$$
 M = \lc (i, j, k, l) \in \bZ^4 \relmid
  2i+4j+7k+l=0 \text{ and } i+j \equiv 0 \text{ mod } 2 \rc.
$$
We choose a basis of $M$ as
\begin{align*}
 e_1 &= (1,1,0,-6), &
 e_2 &= (1,-1,0,2), &
 e_3 &= (0,0,1,-7).
\end{align*}
With respect this basis,
the polytope $\Delta_{(F,G)}$ is the convex hull of
\begin{align*}
\begin{array}{c|c|c|c}
 x^5 y & x y^3 & z^2 & w^{14} \\
 \hline
 (2,2,-1) & (1,-1,-1) & (-1,0,1) & (-1,0,-1)
\end{array}
.
\end{align*}
Similarly,
we choose the basis of the group
$$
 \Mv := \lc (i, j, k, l) \in \bZ^4 \relmid 2 i + 4 j + 7 k + l = 0 \rc
$$
as
\begin{align*}
 \ev_1 &= (2,1,-1,-1), &
 \ev_2 &= (2,-1,0,0), &
 \ev_3 &= (-1,-1,1,-1),
\end{align*}
so that the Newton polytope $\Delta_{(\Fv, \Gv)}$
of $\Fv/xyzw$ is the convex hull of
\begin{align*}
\begin{array}{c|c|c|c}
 x^5y & xy^3 & z^2 & w^{14} \\
 \hline
 (1,1,0) & (1,-1,0) & (0,0,1) & (-6,2,-7)
\end{array}
.
\end{align*}
The polytope
\begin{align*}
 \Delta := \lc
  (2,2,-1),(1,-1,-1),(-1,0,1),(-1,0,-1),(2,3,-1),(0,-1,-1)
 \rc
\end{align*}
contains $\Delta_{(F,G)}$,
and its polar dual polytope
\begin{align*}
 \Delta^\circ := \lc
  (1,1,0),(1,-1,0),(0,0,1),(-6,2,-7),(0,2,-1),(-2,0,-3)
 \rc
\end{align*}
contains $\Delta_{(\Fv,\Gv)}$.

\subsection{$Q_{2,0}$-singularity}

The invertible polynomial
$
 f = x^4z+xy^3+z^2
$
is weighted homogeneous of degree 
$
 (q_1, q_2, q_3; h) = (3,7,12;24).
$
The group of maximal diagonal symmetries is given by
\[
 \Gmax(f)
  = \la J(f) \ra.
\]
The four-variable extension
$
 F = x^4z+xy^3+z^2+w^{12}
$
of $f$ is weighted homogeneous
of weight $(3,7,12,2)$ and satisfies \pref{as:F}.
The subgroup $G$ of $\Gmax(F)$
is given by
$
 G = \la J(F)\ra.
$
The lattice $M$ is given by
$$
 M = \lc (i, j, k, l) \in \bZ^4 \relmid 3i + 7j + 12k + 2l = 0 \rc.
$$
We choose a basis of $M$ as
\begin{align*}
 e_1 &= (0,0,-1,6), & e_2 &= (2,0,-1,3), & e_3 &= (1,-1,0,2). 
\end{align*}
With respect this basis,
the polytope $\Delta_{(F,G)}$ is the convex hull of
\begin{align*}
\begin{array}{c|c|c|c}
 z^2 & w^{12}& xy^3& x^4z \\  \hline (0,-1,1) & (2,-1,1) & (0,1,-2) & (-1,1,1)
\end{array}
.
\end{align*}
Similarly,
we choose the basis of the group
$$
 \Mv := \lc (i, j, k, l) \in \bZ^4 \relmid  2i + 4 j + 5 k + l = 0 \rc
$$
as
\begin{align*}
\ev_1 &= (-1,0,0,2), & \ev_2 &= (1,1,-1,-1), & \ev_3 &= (1,-2,1,1),
\end{align*}
so that the Newton polytope $\Delta_{(\Fv, \Gv)}$
of $\Fv/xyzw$ is the convex hull of
\begin{align*}
\begin{array}{c|c|c|c}
xz^2 & w^7z& x^4y & y^3  \\ \hline (-1,-1,0) & (3,1,1) & (0,2,1) & (0,0,-1)
\end{array}
.
\end{align*}
The polytope
\begin{align*}
 \Delta := \lc (0,1,-2),(0,1,1),(-1,1,1),(0,-1,1),(2,-1,1) \rc
\end{align*}
contains $\Delta_{(F,G)}$,
and its polar dual polytope
\begin{align*}
 \Delta^\circ := \lc (-1,-1,0),(0,-1,0),(0,0,-1),(6,3,2),(0,3,2) \rc
\end{align*}
contains $\Delta_{(\Fv,\Gv)}$.

\subsection{$W_{1,0}$-singularity}

The invertible polynomial
$
 f = x^6+y^2z+z^2
$
is weighted homogeneous of degree 
$
 (q_1, q_2, q_3; h) = (2,3,6;12).
$
The group of maximal diagonal symmetries is given by
\[
 \Gmax(f)
  = \la J(f), \, \frac{1}{2}(1,0,0) \ra.
\]
The four-variable extension
$
 F = x^6+y^2z+z^2+w^{12}
$
of $f$ is weighted homogeneous
of weight $(2,3,6,1)$ and satisfies \pref{as:F}.
The subgroup $G$ of $\Gmax(F)$
is given by
$
 G = \la J(F), \, \frac{1}{2}(1,0,0,1) \ra.
$
The lattice $M$ is given by
$$
 M = \lc (i, j, k, l) \in \bZ^4 \relmid 2i + 3 j + 6 k + l = 0 \text{ and } i+l \equiv 0 \text{ mod 2}  \rc.
$$
We choose a basis of $M$ as
\begin{align*}
 e_1 &= (1,1,0,-5), & e_2 &= (1,-1,0,1), & e_3 &= (0,0,1,-6). 
\end{align*}
With respect this basis,
the polytope $\Delta_{(F,G)}$ is the convex hull of
\begin{align*}
\begin{array}{c|c|c|c}
 z^2 &w^{12} & x^6&y^2z  \\  \hline (-1,0,1) & (-1,0,-1) & (2,3,-1) & (0,-1,0)
\end{array}
.
\end{align*}
Similarly,
we choose the basis of the group
$$
 \Mv := \lc (i, j, k, l) \in \bZ^4 \relmid  2i + 3 j + 6 k + l = 0 \rc
$$
as
\begin{align*}
\ev_1 &= (2,-1,0,-1), & \ev_2 &= (3,0,-1,0), & \ev_3 &= (-1,1,0,-1),
\end{align*}
so that the Newton polytope $\Delta_{(\Fv, \Gv)}$
of $\Fv/xyzw$ is the convex hull of
\begin{align*}
\begin{array}{c|c|c|c}
z^2 &w^{12} &x^6 &y^2z  \\ \hline (1,-1,0) & (-5,1,-6) & (1,1,0) & (0,0,1)
\end{array}
.
\end{align*}
If we set
\begin{align*}
 \Delta := \Delta_{(F,G)} = \lc (-1,0,1), (-1,0,-1), (2,3,-1), (0,-1,0) \rc,
\end{align*}
then its polar dual polytope
\begin{align*}
 \Delta^\circ := \lc (1,1,0),(1,-1,0),(-5,1,-6),(-1,1,2) \rc
\end{align*}
contains $\Delta_{(\Fv,\Gv)}$.

\subsection{$S_{1,0}$-singularity}

The invertible polynomial
$
 f = x^5y+y^2z+z^2
$
is weighted homogeneous of degree 
$
 (q_1, q_2, q_3; h) = (3,5,10;20).
$
The group of maximal diagonal symmetries is given by
\[
 \Gmax(f)
  = \la J(f) \ra.
\]
The four-variable extension
$
 F = x^5y+y^2z+z^2+w^{10}
$
of $f$ is weighted homogeneous
of weight $(3,5 ,10 ,2 )$ and satisfies \pref{as:F}.
The subgroup $G$ of $\Gmax(F)$
is given by
$
 G = \la J(F) \ra.
$
The lattice $M$ is given by
$$
 M = \lc (i, j, k, l) \in \bZ^4 \relmid 3i + 5 j +10  k +2 l = 0 \rc.
$$
We choose a basis of $M$ as
\begin{align*}
 e_1 &= (-1,-1,1,-1), & e_2 &= (-1,1,0,-1), & e_3 &= (-1,1,-1,4). 
\end{align*}
With respect this basis,
the polytope $\Delta_{(F,G)}$ is the convex hull of
\begin{align*}
\begin{array}{c|c|c|c}
 z^2 & w^{10}& x^5y&y^2z  \\  \hline (1,0,0) & (1,-2,2) & (-2,-1,-1) & (0,1,0)
\end{array}
.
\end{align*}
Similarly,
we choose the basis of the group
$$
 \Mv := \lc (i, j, k, l) \in \bZ^4 \relmid  2i + 4 j +  3k + l = 0 \rc
$$
as
\begin{align*}
\ev_1 &= (-2,0,1,1), & \ev_2 &= (-1,1,0,-2), & \ev_3 &= (-1,0,0,2),
\end{align*}
so that the Newton polytope $\Delta_{(\Fv, \Gv)}$
of $\Fv/xyzw$ is the convex hull of
\begin{align*}
\begin{array}{c|c|c|c}
yz^2 & w^{10}& x^5& xy^2  \\ \hline (1,0,-1) & (-1,-1,4) & (-1,-1,-1) & (-1,1,1)
\end{array}
.
\end{align*}
The polytope
\begin{align*}
 \Delta := \lc (1,0,0),(1,-2,2),(-2,-1,-1),(-1,2,0),(-2,-2,-1) \rc
\end{align*}
contains $\Delta_{(F,G)}$,
and its polar dual polytope
\begin{align*}
 \Delta^\circ := \lc (-1,-1,-1),(-1,1,1),(-1,1,2),(-1,-1,4),(1,0,-1),(-1,0,3) \rc
\end{align*}
contains $\Delta_{(\Fv,\Gv)}$.

\subsection{$U_{1,0}$-singularity}

The invertible polynomial
$
 f = x^3y+y^2z+z^3
$
is weighted homogeneous of degree 
$
 (q_1, q_2, q_3; h) = (2,3,3;9).
$
The group of maximal diagonal symmetries is given by
\[
 \Gmax(f)
  = \la J(f),\, \frac{1}{2}(1,1,0)\ra.
\]
The four-variable extension
$
 F = x^3y+y^2z+z^3+w^9
$
of $f$ is weighted homogeneous
of weight $(2,3,3,1)$ and satisfies \pref{as:F}.
The subgroup $G$ of $\Gmax(F)$
is given by
$
 G = \la J(F), \, \frac{1}{2}(1,1,0,0) \ra.
$
The lattice $M$ is given by
$$
 M = \lc (i, j, k, l) \in \bZ^4 \relmid 2i + 3 j + 3 k + l = 0 \text{ and } i+j \equiv 0 \text{ mod }  2 \rc.
$$
We choose a basis of $M$ as
\begin{align*}
 e_1 &= (1,1,0,-5), & e_2 &= (1,-1,0,1), & e_3 &= (0,0,1,-3). 
\end{align*}
With respect this basis,
the polytope $\Delta_{(F,G)}$ is the convex hull of
\begin{align*}
\begin{array}{c|c|c|c}
 z^3 &w^9 &x^3y &y^2z  \\  \hline (-1,0,2) & (-1,0,-1) & (1,1,-1) & (0,-1,0)
\end{array}
.
\end{align*}
Similarly,
we choose the basis of the group
$$
 \Mv := \lc (i, j, k, l) \in \bZ^4 \relmid  3i + 3 j + 2 k + l = 0 \rc
$$
as
\begin{align*}
\ev_1 &= (1,0,-1,-1), & \ev_2 &= (1,-1,0,0), & \ev_3 &= (-1,0,2,-1),
\end{align*}
so that the Newton polytope $\Delta_{(\Fv, \Gv)}$
of $\Fv/xyzw$ is the convex hull of
\begin{align*}
\begin{array}{c|c|c|c}
x^3 & w^9 & xy^2& yz^3 \\ \hline (1,1,0) & (-5,1,-3) & (1,-1,0) & (0,0,1)
\end{array}
.
\end{align*}
The polytope
\begin{align*}
 \Delta := \lc (-1,0,2),(-1,0,-1),(1,2,-1),(1,1,-1),(0,-1,0),(0,-1,-1) \rc 
\end{align*}
contains $\Delta_{(F,G)}$,
and its polar dual polytope
\begin{align*}
 \Delta^\circ := \lc (1,1,0),(1,-1,0),(0,0,1),(-3,0,-2),(-2,1,0),(-5,1,-3) \rc
\end{align*}
contains $\Delta_{(\Fv,\Gv)}$.

\subsection{$E_{18}$-singularity}

The invertible polynomial
$
 f = x^5 + y^3+xz^2
$
is weighted homogeneous of degree 
$
 (q_1, q_2, q_3; h) = (3,5,6;15).
$
The group of maximal diagonal symmetries is given by
\[
 \Gmax(f)
  = \la J(f), \, \frac{1}{2}(0,0,1) \ra.
\]
The four-variable extension
$
 F = x^5 + y^3+xz^2+zw^9
$
of $f$ is weighted homogeneous
of weight $(3,5 ,6 ,1 )$ and satisfies \pref{as:F}.
The subgroup $G$ of $\Gmax(F)$
is given by
$
 G = \la J(F), \, \frac{1}{2}(0,0,1,1) \ra.
$
The lattice $M$ is given by
$$
 M = \lc (i, j, k, l) \in \bZ^4 \relmid 3i + 5 j + 6 k + l = 0 \text{ and } k+l \equiv 0 \text{ mod 2 }  \rc.
$$
We choose a basis of $M$ as
\begin{align*}
 e_1 &= (1,1,0,-8), & e_2 &= (0,1,1,-11), & e_3 &= (1,0,1,-9).
\end{align*}
With respect this basis,
the polytope $\Delta_{(F,G)}$ is the convex hull of
\begin{align*}
\begin{array}{c|c|c|c}
 xz^2 &w^9z &x^5 &y^3  \\  \hline (-1,0,1) & (-1,0,0) & (2,-3,2) & (1,1,-2)
\end{array}
.
\end{align*}
Similarly,
we choose the basis of the group
$$
 \Mv := \lc (i, j, k, l) \in \bZ^4 \relmid  i + 3 j + 4 k + l = 0 \rc
$$
as
\begin{align*}
\ev_1 &= (-1,1,-1,2), & \ev_2 &= (0,1,0,-3), & \ev_3 &= (0,-2,1,2),
\end{align*}
so that the Newton polytope $\Delta_{(\Fv, \Gv)}$
of $\Fv/xyzw$ is the convex hull of
\begin{align*}
\begin{array}{c|c|c|c}
z^2w &w^9 &x^5z &y^3  \\ \hline (1,2,2) & (1,-2,0) & (-4,-5,-4) & (1,1,0)
\end{array}
.
\end{align*}
The polytope
\begin{align*}
 \Delta := \lc (1,1,-2),(-1,0,0),(-1,0,1),(2,-3,2),(1,-2,1) \rc
\end{align*}
contains $\Delta_{(F,G)}$,
and its polar dual polytope
\begin{align*}
 \Delta^\circ := \lc (1,2,2),(1,-2,0),(-8,-11,-9),(0,1,1),(1,1,0)\rc
\end{align*}
contains $\Delta_{(\Fv,\Gv)}$.

\subsection{$E_{19}$-singularity}

The invertible polynomial
$
 f = x^7+xy^3+z^2
$
is weighted homogeneous of degree 
$
 (q_1, q_2, q_3; h) = (2,4,7;14).
$
The group of maximal diagonal symmetries is given by
\[
 \Gmax(f)
  = \la J(f), \, \frac{1}{3}(0,1,0) \ra.
\]
The four-variable extension
$
 F = x^7+xy^3+z^2+yw^{10}
$
of $f$ is weighted homogeneous
of weight $(2,4 ,7 ,1 )$ and satisfies \pref{as:F}.
The subgroup $G$ of $\Gmax(F)$
is given by
$
 G = \la J(F), \, \frac{1}{3}(0,1,0,-1) \ra.
$
The lattice $M$ is given by
$$
 M = \lc (i, j, k, l) \in \bZ^4 \relmid 2i + 4 j + 7 k + l = 0 \text{ and } j-l \equiv 0 \text{ mod 3}  \rc.
$$
We choose a basis of $M$ as
\begin{align*}
  e_1 &= (3,0,0,-6), & e_2 &= (1,-1,0,2), & e_3 &= (0,1,1,-11). 
\end{align*}
With respect this basis,
the polytope $\Delta_{(F,G)}$ is the convex hull of
\begin{align*}
\begin{array}{c|c|c|c}
 z^2 &w^{10}y &x^7 &xy^3  \\  \hline (-1,2,1) & (0,-1,-1) & (2,0,-1) & (1,-3,-1)
\end{array}
.
\end{align*}
Similarly,
we choose the basis of the group
$$
 \Mv := \lc (i, j, k, l) \in \bZ^4 \relmid  i +  3j + 5 k + l = 0 \rc
$$
as
\begin{align*}
\ev_1 &= (2,1,-1,0), & \ev_2 &= (0,-3,2,-1), & \ev_3 &= (-1,-1,1,-1),
\end{align*}
so that the Newton polytope $\Delta_{(\Fv, \Gv)}$
of $\Fv/xyzw$ is the convex hull of
\begin{align*}
\begin{array}{c|c|c|c}
z^2 &w^{10} &x^7y &y^3w  \\ \hline (0,0,1) & (-6,2,-11) & (3,1,0) & (0,-1,1)
\end{array}
.
\end{align*}
The polytope
\begin{align*}
 \Delta := \lc (-1,2,1),(0,-1,-1),(1,0,-1),(2,0,-1),(1,-3,-1) \rc
\end{align*}
contains $\Delta_{(F,G)}$,
and its polar dual polytope
\begin{align*}
 \Delta^\circ := \lc (0,0,1),(-6,2,-11),(4,2,-1),(1,-1,2),(0,-1,1) \rc
\end{align*}
contains $\Delta_{(\Fv,\Gv)}$.

\subsection{$E_{20}$-singularity}

The invertible polynomial
$
 f =x^{11}+y^3+z^2
$
is weighted homogeneous of degree 
$
 (q_1, q_2, q_3; h) = (6,22,33;66).
$
The group of maximal diagonal symmetries is given by
\[
 \Gmax(f)
  = \la J(f)\ra.
\]
The four-variable extension
$
 F = x^{11}+y^3+z^2+xw^{12}
$
of $f$ is weighted homogeneous
of weight $(6,22 ,33 ,5 )$ and satisfies \pref{as:F}.
The subgroup $G$ of $\Gmax(F)$
is given by
$
 G = \la J(F) \ra.
$
The lattice $M$ is given by
$$
 M = \lc (i, j, k, l) \in \bZ^4 \relmid 6i +  22j +  33k + 5l = 0  \rc.
$$
We choose a basis of $M$ as
\begin{align*}
 e_1 &= (-1,-1,1,-1), & e_2 &= (0,-1,-1,11), & e_3 &= (2,-1,0,2). 
\end{align*}
With respect this basis,
the polytope $\Delta_{(F,G)}$ is the convex hull of
\begin{align*}
\begin{array}{c|c|c|c}
 z^2 &w^{12}x &x^{11} &y^3  \\  \hline (1,0,0) & (0,1,0) & (-2,-1,4) & (-1,0,-1)
\end{array}
.
\end{align*}
Similarly,
we choose the basis of the group
$$
 \Mv := \lc (i, j, k, l) \in \bZ^4 \relmid  i + 4 j + 6 k + l = 0 \rc
$$
as
\begin{align*}
\ev_1 &= (-2,-1,1,0), & \ev_2 &= (-1,0,0,1), & \ev_3 &= (4,-1,0,0),
\end{align*}
so that the Newton polytope $\Delta_{(\Fv, \Gv)}$
of $\Fv/xyzw$ is the convex hull of
\begin{align*}
\begin{array}{c|c|c|c}
 z^2 &w^{12} &wx^{11} &y^3  \\ \hline (1,-1,0) & (-1,11,2) & (-1,0,2) & (-1,-1,-1)
\end{array}
.
\end{align*}
The polytope
\begin{align*}
 \Delta := \lc (1,0,0) , (0,1,0) , (-2,-1,4) , (-1,0,-1) \rc
\end{align*}
contains $\Delta_{(F,G)}$,
and its polar dual polytope
\begin{align*}
 \Delta^\circ := \lc (-1,-1,-1),(-1,-1,2),(1,-1,0),(-1,11,2)\rc
\end{align*}
contains $\Delta_{(\Fv,\Gv)}$.

\subsection{$Z_{17}$-singularity}

The invertible polynomial
$
 f = x^4y+y^3+xz^2 
$
is weighted homogeneous of degree 
$
 (q_1, q_2, q_3; h) = (2,4,5;12).
$
The group of maximal diagonal symmetries is given by
\[
 \Gmax(f)
  = \la J(f) \ra.
\]
The four-variable extension
$
 F = x^4y+y^3+xz^2+zw^7
$
of $f$ is weighted homogeneous
of weight $(2,4,5,1;12)$ and satisfies \pref{as:F}.
The subgroup $G$ of $\Gmax(F)$
is given by
$
 G = \la J(F) \ra.
$
The lattice $M$ is given by
$$
 M = \lc (i, j, k, l) \in \bZ^4 \relmid 2i + 4 j + 5 k + l = 0 \rc.
$$
We choose a basis of $M$ as
\begin{align*}
e_1 &= (1,1,0,-6), & e_2 &= (1,0,1,-7), & e_3 &= (0,1,1,-9). 
\end{align*}
With respect this basis,
the polytope $\Delta_{(F,G)}$ is the convex hull of
\begin{align*}
\begin{array}{c|c|c|c}
 xz^2 &w^7z &x^4y &y^3  \\  \hline (-1,1,0) & (-1,0,0) & (2,1,-2) & (1,-2,1)
\end{array}
.
\end{align*}
Similarly,
we choose the basis of the group
$$
 \Mv := \lc (i, j, k, l) \in \bZ^4 \relmid  i + 2 j + 3 k + l = 0 \rc
$$
as
\begin{align*}
\ev_1 &= (-1,1,-1,2), & \ev_2 &= (0,-2,1,1), & \ev_3 &= (0,1,0,-2), 
\end{align*}
so that the Newton polytope $\Delta_{(\Fv, \Gv)}$
of $\Fv/xyzw$ is the convex hull of
\begin{align*}
\begin{array}{c|c|c|c}
wz^2 & w^7 & x^4z & xy^3 \\ \hline (1,2,2) & (1,0,-2) & (-3,-3,-4) & (0,-1,0)
\end{array}
.
\end{align*}
The polytope
\begin{align*}
 \Delta := \lc (-1,0,0), (1,-2,1), (1,0,-1), (0,1,-1), (2,1,-2), (-1,1,0) \rc
\end{align*}
contains $\Delta_{(F,G)}$,
and its polar dual polytope
\begin{align*}
 \Delta^\circ := \lc (1,2,2), (1,1,2), (0,1,1), (1,0,-2), (1,0,1), (0,-1,0), (-6,-7,-9) \rc
\end{align*}
contains $\Delta_{(\Fv,\Gv)}$.

\subsection{$Z_{19}$-singularity}

The invertible polynomial
$
 f = x^9y+y^3+z^2
$
is weighted homogeneous of degree 
$
 (q_1, q_2, q_3; h) = (4,18,27;54).
$
The group of maximal diagonal symmetries is given by
\[
 \Gmax(f)
  = \la J(f)\ra.
\]
The four-variable extension
$
 F = x^9y+y^3+z^2+xw^{10}
$
of $f$ is weighted homogeneous
of weight $(4,18 ,27 ,5 )$ and satisfies \pref{as:F}.
The subgroup $G$ of $\Gmax(F)$
is given by
$
 G = \la J(F) \ra.
$
The lattice $M$ is given by
$$
 M = \lc (i, j, k, l) \in \bZ^4 \relmid 4i + 18 j + 27 k + 5l = 0 \rc.
$$
We choose a basis of $M$ as
\begin{align*}
 e_1 &= (-1,-1,1,-1), & e_2 &= (-1,2,-1,-1), & e_3 &= (0,-1,-1,9). 
\end{align*}
With respect this basis,
the polytope $\Delta_{(F,G)}$ is the convex hull of
\begin{align*}
\begin{array}{c|c|c|c}
 z^2 &w^{10}x &x^9y &y^3  \\  \hline (1,0,0) & (0,0,1) & (-5,-3,-1) & (0,1,0)
\end{array}
.
\end{align*}
Similarly,
we choose the basis of the group
$$
 \Mv :=  \lc (i, j, k, l) \in \bZ^4 \relmid  i + 3 j + 5 k + l = 0 \rc
$$
as
\begin{align*}
\ev_1 &= (-5,0,1,0), & \ev_2 &= (-3,1,0,0), & \ev_3 &= (-1,0,0,1),
\end{align*}
so that the Newton polytope $\Delta_{(\Fv, \Gv)}$
of $\Fv/xyzw$ is the convex hull of
\begin{align*}
\begin{array}{c|c|c|c}
z^2 & w^{10}&wx^9 & xy^3 \\ \hline (1,-1,-1) & (-1,-1,9) & (-1,-1,0) & (-1,2,-1)
\end{array}
.
\end{align*}
The polytope
\begin{align*}
 \Delta := \lc (1,0,0),(0,1,0),(0,0,1),(-3,-2,0),(-5,-3,-1) \rc
\end{align*}
contains $\Delta_{(F,G)}$,
and its polar dual polytope
\begin{align*}
 \Delta^\circ := \lc (-1,-1,-1),(-1,2,-1),(-1,2,0),(-1,-1,9),(1,-1,-1) \rc
\end{align*}
contains $\Delta_{(\Fv,\Gv)}$.

\subsection{$Q_{17}$-singularity}

The invertible polynomial
$
 f = x^5z+xy^3+z^2
$
is weighted homogeneous of degree 
$
 (q_1, q_2, q_3; h) = (1,3,5;10).
$
The group of maximal diagonal symmetries is given by
\[
 \Gmax(f)
  = \la J(f), \, \frac{1}{3}(0,1,0) \ra.
\]
The four-variable extension
$
 F = x^5z+xy^3+z^2+yw^7
$
of $f$ is weighted homogeneous
of weight $(1,3 ,5 ,1 )$ and satisfies \pref{as:F}.
The subgroup $G$ of $\Gmax(F)$
is given by
$
 G = \la J(F), \, \frac{1}{3}(0,1,0,-1) \ra.
$
The lattice $M$ is given by
$$
 M = \lc (i, j, k, l) \in \bZ^4 \relmid i +  3j + 5 k + l = 0 \text{ and } j-l \equiv 0 \text{ mod 3}  \rc.
$$
We choose a basis of $M$ as
\begin{align*}
  e_1 &= (3,0,0,-3), & e_2 &= (0,1,1,-8), & e_3 &= (1,0,1,-6). 
\end{align*}
With respect this basis,
the polytope $\Delta_{(F,G)}$ is the convex hull of
\begin{align*}
\begin{array}{c|c|c|c}
 z^2 &w^7y &xy^3 &x^5z  \\  \hline (-1,-1,2) & (0,0,-1) & (1,2,-3) & (1,-1,1)
\end{array}
.
\end{align*}
Similarly,
we choose the basis of the group
$$
 \Mv := \lc (i, j, k, l) \in \bZ^4 \relmid  i + 2 j + 3 k + l = 0 \rc
$$
as
\begin{align*}
\ev_1 &= (1,1,-1,0), & \ev_2 &= (-1,2,-1,0), & \ev_3 &= (1,-3,2,-1),
\end{align*}
so that the Newton polytope $\Delta_{(\Fv, \Gv)}$
of $\Fv/xyzw$ is the convex hull of
\begin{align*}
\begin{array}{c|c|c|c}
 xz^2 &w^7 &x^5y &wy^3  \\ \hline (0,1,1) & (-3,-8,-6) & (3,0,1) & (0,1,0)
\end{array}
.
\end{align*}
The polytope
\begin{align*}
 \Delta := \lc (-1,-1,2),(0,0,-1),(0,-1,0),(3,-1,0),(1,2,-3) \rc
\end{align*}
contains $\Delta_{(F,G)}$,
and its polar dual polytope
\begin{align*}
 \Delta^\circ := \lc  (0,1,1),(-3,-8,-6),(4,-1,1),(2,1,1),(0,1,0) \rc
\end{align*}
contains $\Delta_{(\Fv,\Gv)}$.

\subsection{$Q_{18}$-singularity}

The invertible polynomial
$
 f =x^8z+y^3+z^2
$
is weighted homogeneous of degree 
$
 (q_1, q_2, q_3; h) = (3,16,24;48).
$
The group of maximal diagonal symmetries is given by
\[
 \Gmax(f)
  = \la J(f) \ra.
\]
The four-variable extension
$
 F = x^8z+y^3+z^2+xw^9
$
of $f$ is weighted homogeneous
of weight $(3,16,24,5)$ and satisfies \pref{as:F}.
The subgroup $G$ of $\Gmax(F)$
is given by
$
 G = \la J(F) \ra.
$
The lattice $M$ is given by
$$
 M = \lc (i, j, k, l) \in \bZ^4 \relmid 3i + 16 j + 24 k + 5l = 0 \rc.
$$
We choose a basis of $M$ as
\begin{align*}
 e_1 &= (-1,-1,1,-1), & e_2 &= (-1,2,-1,-1), & e_3 &= (0,-1,-1,8). 
\end{align*}
With respect this basis,
the polytope $\Delta_{(F,G)}$ is the convex hull of
\begin{align*}
\begin{array}{c|c|c|c}
 z^2 &w^9x &x^8z &y^3  \\  \hline (1,0,0) & (0,0,1) & (-4,-3,-1) & (0,1,0)
\end{array}
.
\end{align*}
Similarly,
we choose the basis of the group
$$
 \Mv := \lc (i, j, k, l) \in \bZ^4 \relmid  i +  3j + 4 k + l = 0 \rc
$$
as
\begin{align*}
\ev_1 &= (-4,0,1,0), & \ev_2 &= (-3,1,0,0), & \ev_3 &= (-1,0,0,1),
\end{align*}
so that the Newton polytope $\Delta_{(\Fv, \Gv)}$
of $\Fv/xyzw$ is the convex hull of
\begin{align*}
\begin{array}{c|c|c|c}
xz^2 &w^9 &wx^8 &y^3  \\ \hline (1,-1,-1) & (-1,-1,8) & (-1,-1,0) & (-1,2,-1)
\end{array}
.
\end{align*}
The polytope
\begin{align*}
 \Delta := \lc (1,0,0),(0,1,0),(0,0,1),(-3,-2,0),(-4,-3,-1) \rc
\end{align*}
contains $\Delta_{(F,G)}$,
and its polar dual polytope
\begin{align*}
 \Delta^\circ := \lc (-1,-1,-1),(1,-1,-1),(1,-1,0),(-1,-1,8),(-1,2,-1) \rc
\end{align*}
contains $\Delta_{(\Fv,\Gv)}$.

\subsection{$W_{18}$-singularity}

The invertible polynomial
$
 f = x^7+y^2z+z^2
$
is weighted homogeneous of degree 
$
 (q_1, q_2, q_3; h) = (4,7,14;28).
$
The group of maximal diagonal symmetries is given by
\[
 \Gmax(f)
  = \la J(f) \ra.
\]
The four-variable extension
$
 F = x^7+y^2z+z^2+xw^8
$
of $f$ is weighted homogeneous
of weight $(4,7,14,3)$ and satisfies \pref{as:F}.
The subgroup $G$ of $\Gmax(F)$
is given by
$
 G = \la J(F) \ra.
$
The lattice $M$ is given by
$$
 M = \lc (i, j, k, l) \in \bZ^4 \relmid 4i + 7 j +14  k +3 l = 0  \rc.
$$
We choose a basis of $M$ as
\begin{align*}
 e_1 &= (0,2,-1,0), & e_2 &= (1,1,-1,1), & e_3 &= (-1,0,-1,6). 
\end{align*}
With respect this basis,
the polytope $\Delta_{(F,G)}$ is the convex hull of
\begin{align*}
\begin{array}{c|c|c|c}
 z^2 &w^8x &x^7 &y^2z  \\  \hline (0,-1,0) & (-1,1,1) & (-3,5,-1) & (1,-1,0)
\end{array}
.
\end{align*}
Similarly,
we choose the basis of the group
$$
 \Mv := \lc (i, j, k, l) \in \bZ^4 \relmid  i + 4 j + 2 k + l = 0 \rc
$$
as
\begin{align*}
\ev_1 &= (-3,1,0,-1), & \ev_2 &= (5,-1,-1,1), & \ev_3 &= (-1,0,0,1),
\end{align*}
so that the Newton polytope $\Delta_{(\Fv, \Gv)}$
of $\Fv/xyzw$ is the convex hull of
\begin{align*}
\begin{array}{c|c|c|c}
yz^2 &w^8 &wx^7 &y^2  \\ \hline (-1,-1,-1) & (0,1,6) & (0,1,-1) & (2,1,0)
\end{array}
.
\end{align*}
The polytope
\begin{align*}
 \Delta := \lc (0,-1,0),(-1,1,1),(-3,5,-1),(2,-1,0),(0,0,1) \rc
\end{align*}
contains $\Delta_{(F,G)}$,
and its polar dual polytope
\begin{align*}
 \Delta^\circ := \lc (-1,-1,-1),(0,1,6),(0,1,-1),(1,1,-1),(2,1,0) \rc
\end{align*}
contains $\Delta_{(\Fv,\Gv)}$.

\subsection{$S_{17}$-singularity}

The invertible polynomial
$
 f = x^6y+y^2z+z^2
$
is weighted homogeneous of degree 
$
 (q_1, q_2, q_3; h) = (1,2,4;8).
$
The group of maximal diagonal symmetries is given by
\[
 \Gmax(f)
  = \la J(f), \, \frac{1}{3}(1,0,0)\ra.
\]
The four-variable extension
$
 F = x^6y+y^2z+z^2+xw^7
$
of $f$ is weighted homogeneous
of weight $(1,2,4,1)$ and satisfies \pref{as:F}.
The subgroup $G$ of $\Gmax(F)$
is given by
$
 G = \la J(F), \, \frac{1}{3}(1,0,0,-1) \ra.
$
The lattice $M$ is given by
$$
 M =  \lc (i, j, k, l) \in \bZ^4 \relmid i +  2j + 4 k + l = 0 \text{ and } i-l \equiv 0 \text{ mod 3}  \rc.
$$
We choose a basis of $M$ as
\begin{align*}
 e_1 &= (1,-1,0,1), & e_2 &= (0,1,1,-6), & e_3 &= (0,0,3,-12). 
\end{align*}
With respect this basis,
the polytope $\Delta_{(F,G)}$ is the convex hull of
\begin{align*}
\begin{array}{c|c|c|c}
 z^2 &w^7x &x^6y &y^2z  \\  \hline (-1,-2,1) & (0,-1,0) & (5,5,-2) & (-1,0,0)
\end{array}
.
\end{align*}
Similarly,
we choose the basis of the group
$$
 \Mv := \lc (i, j, k, l) \in \bZ^4 \relmid  i + 3 j + 2 k + l = 0 \rc
$$
as
\begin{align*}
\ev_1 &= (5,-1,-1,0), & \ev_2 &= (5,0,-2,-1), & \ev_3 &= (-2,0,1,0),
\end{align*}
so that the Newton polytope $\Delta_{(\Fv, \Gv)}$
of $\Fv/xyzw$ is the convex hull of
\begin{align*}
\begin{array}{c|c|c|c}
yz^2 &w^7 &wx^6 &xy^2  \\ \hline (0,1,3) & (1,-6,-12) & (1,0,0) & (-1,1,0)
\end{array}
.
\end{align*}
The polytope
\begin{align*}
 \Delta := \lc (-1,-2,1),(0,-1,0),(6,5,-2),(5,5,-2),(-1,2,-1) \rc
\end{align*}
contains $\Delta_{(F,G)}$,
and its polar dual polytope
\begin{align*}
 \Delta^\circ := \lc (0,1,3),(1,-6,-12),(1,1,2),(-1,1,0),(0,-3,-7) \rc
\end{align*}
contains $\Delta_{(\Fv,\Gv)}$.
\subsection{$U_{16}$-singularity}

The invertible polynomial
$
 f = x^5+y^2z+yz^2
$
is weighted homogeneous of degree 
$
 (q_1, q_2, q_3; h) = (3,5,5;15).
$
The group of maximal diagonal symmetries is given by
\[
 \Gmax(f)
  = \la J(f)\ra.
\]
The four-variable extension
$
 F = x^5+y^2z+yz^2+xw^6
$
of $f$ is weighted homogeneous
of weight $(3,5,5,2)$ and satisfies \pref{as:F}.
The subgroup $G$ of $\Gmax(F)$
is given by
$
 G =  \la J(F) \ra.
$
The lattice $M$ is given by
$$
 M =  \lc (i, j, k, l) \in \bZ^4 \relmid 3i + 5 j +5  k +2 l = 0 \rc.
$$
We choose a basis of $M$ as
\begin{align*}
 e_1 &= (-1,-1,0,4), & e_2 &= (-1,0,-1,4), & e_3 &= (0,-1,-1,5). 
\end{align*}
With respect this basis,
the polytope $\Delta_{(F,G)}$ is the convex hull of
\begin{align*}
\begin{array}{c|c|c|c}
 yz^2 &w^6x &x^5 &y^2z  \\  \hline (1,0,-1) & (0,0,1) & (-2,-2,3) & (0,1,-1)
\end{array}
.
\end{align*}
Similarly,
we choose the basis of the group
$$
 \Mv := \lc (i, j, k, l) \in \bZ^4 \relmid  i + 2 j + 2 k + l = 0 \rc
$$
as
\begin{align*}
\ev_1 &= (-2,0,1,0), & \ev_2 &= (-2,1,0,0), & \ev_3 &= (3,-1,-1,1),
\end{align*}
so that the Newton polytope $\Delta_{(\Fv, \Gv)}$
of $\Fv/xyzw$ is the convex hull of
\begin{align*}
\begin{array}{c|c|c|c}
yz^2 &w^6 &wx^5 &y^2z  \\ \hline (0,-1,-1) & (4,4,5) & (-1,-1,0) & (-1,0,-1)
\end{array}
.
\end{align*}
The polytope
\begin{align*}
 \Delta := \lc (1,0,-1) , (0,0,1) , (-2,-2,3) , (0,1,-1) \rc
\end{align*}
contains $\Delta_{(F,G)}$,
and its polar dual polytope
\begin{align*}
 \Delta^\circ := \lc (1,-2,-1),(4,4,5),(-2,-2,-1),(-2,1,-1) \rc
\end{align*}
contains $\Delta_{(\Fv,\Gv)}$.
\bibliographystyle{amsalpha}
\bibliography{bibs}

\noindent
Makiko Mase

Department of Mathematics and Information Sciences,
Tokyo Metropolitan University,
1-1 Minami-Osawa,
Hachioji-shi
Tokyo,
192-0397,
Japan.

Advanced Mathematical Institute,
Osaka City University,
3-3-138 Sugimoto,
Sumiyoshi-ku,
Osaka,
558-8585,
Japan.

{\em e-mail address}\ : \  mtmase@arion.ocn.ne.jp
\ \vspace{0mm} \\

\noindent
Kazushi Ueda

Department of Mathematics,
Graduate School of Science,
Osaka University,
Machikaneyama 1-1,
Toyonaka,
Osaka,
560-0043,
Japan.

{\em e-mail address}\ : \  kazushi@math.sci.osaka-u.ac.jp
\ \vspace{0mm} \\

\end{document}